\documentclass[12pt]{article}

\usepackage[margin=1in,
includefoot,
headsep=20pt 
]{geometry}
\usepackage{amsmath,amssymb,amsthm}
\usepackage{graphicx,verbatim}
\usepackage{algorithm,algorithmic}
\usepackage[small,bf]{caption}
\usepackage{subcaption} 
\usepackage{cases}
\usepackage[table,dvipsnames]{xcolor}
\usepackage[shortlabels]{enumitem}
\usepackage{hyperref,url}
\usepackage{cite}
\hypersetup{
	colorlinks=true,
	linkcolor=red,     
	urlcolor=magenta,
	citecolor={blue},
}
\usepackage{booktabs,adjustbox,multirow}  
\usepackage{import} 
\usepackage{aligned-overset} 
\usepackage[toc,page]{appendix}
\usepackage[normalem]{ulem}
\usepackage{tikz,psfrag,pgfplots}
\pgfplotsset{compat=newest}
\usetikzlibrary{plotmarks,arrows.meta}
\usepgfplotslibrary{patchplots,external}
\usepackage{lmodern}
\usepackage{dsfont} 
\usepackage{bm,bbm} 
\usepackage{upgreek} 

\newcommand{\grad}{{\nabla}}   
\newcommand{\zero}{\mathbf{0}}  
\newcommand{\one}{\mathbf{1}}   
\newcommand{\real}{\mathbb{R}}  

\newcommand{\col}{\mathrm{col}}     

\newcommand{\Ex}{\mathop{\mathds{E}{}}}

\newcommand{\minimize}{\mathop{\text{minimize}}}

\newcommand{\eg}{{\it e.g.}}
\newcommand{\ie}{{\it i.e.}}

\renewcommand{\top}{\textit{\footnotesize \texttt{T}}} 
\newcommand{\define}{\triangleq} 
\newcommand{\norm}[1]{\lVert #1\rVert}

\def\bxi{\boldsymbol{\xi}}

\def\F{{\mathbf{F}}}

\def\I{{\mathbf{I}}}

\def\P{{\mathbf{P}}}

\def\R{{\mathbf{R}}}

\def\W{{\mathbf{W}}}

\def\f{{\mathbf{f}}}
\def\g{{\mathbf{g}}}

\def\q{{\mathbf{q}}}

\def\u{{\mathbf{u}}}
\def\v{{\mathbf{v}}}

\def\x{{\mathbf{x}}}
\def\y{{\mathbf{y}}}

\newcommand{\cF}{{\mathcal{F}}}

\def\evdots{\vbox{\baselineskip=2pt \lineskiplimit=0pt 
		\kern6pt \hbox{$.$}\hbox{$.$}\hbox{$.$}}}  
\newtheorem{lemma}{{Lemma}}
\newtheorem{assumption}{{Assumption}}
\newtheorem{theorem}{{Theorem}}
\newtheorem{corollary}{{Corollary}}
\newtheorem{proposition}{{Proposition}}

\newtheorem{remark}{{Remark}}

\usepackage{sectsty}
\allsectionsfont{\sffamily}
\usepackage{fancyhdr}

\begin{document}

\title{\bfseries   Stochastic Gradient Tracking over Time-Varying Networks: One-Step Lyapunov Analysis}
\author{\sffamily Sulaiman A. Alghunaim \\
{\small Department of Electrical Engineering, Kuwait University}
\\ {\tt\small sulaiman.alghunaim@ku.edu.kw}
}

\date{}
 \maketitle
\thispagestyle{fancy} 


	\begin{abstract}
		We study decentralized stochastic gradient tracking over a time-varying
		network of $N$ agents under a uniform window-mixing condition. Products of $\tau$
		consecutive doubly stochastic mixing matrices contract disagreement by a
		factor $\lambda<1$, although individual matrices need not contract
		disagreement strictly and individual communication graphs may be
		disconnected. We construct a time-varying quadratic norm that turns this
		window contraction into an exact one-step Lyapunov identity. This leads to
		coupled one-step recursions for the centroid and disagreement errors,
		without unrolling the dynamics over communication windows. For smooth
		strongly convex objectives, the leading stochastic term is
		$\widetilde{\mathcal O}(1/(NK))$; for smooth convex objectives, it is
		$\mathcal O(1/\sqrt{NK})$. Both match their centralized mini-batch
		counterparts and yield linear speedup after a network-dependent transient.
	\end{abstract}
\section{Introduction}
\label{sec:intros}
In decentralized optimization, a group of agents minimizes a global
objective by combining local computation with communication restricted to
network neighbors; no central node collects the data or the iterates
\cite{nedic2009distributed,sayed2014nowbook}. Problems of this form arise in
multi-agent coordination, formation control, and flocking
\cite{jadbabaie2003coordination,olfatisaber2006flocking,shorinwa2024distributed},
in distributed model-predictive control \cite{mota2012distributedadmm}, and
in distributed estimation and adaptive processing over sensor networks
\cite{schizas2007consensus,sayed2014nowbook}. The time-varying regime is the
practically important one: mobility, scheduling, link failures, and
bandwidth constraints all cause the communication graph to change from one
iteration to the next.

\subsection{Problem and algorithm} 
\noindent{\bfseries\small Problem.} We study the following distributed stochastic optimization problem: \begin{equation} 
	\label{min_main}
	 \minimize_{x\in\real^d} \quad f(x) \define \frac{1}{N}\sum_{i=1}^{N} f_i(x), \qquad f_i(x) \define \Ex\left[F_i(x;\xi_i)\right], 
	\end{equation} over a decentralized network of $N$ agents. Each agent $i$ has access only to its private local objective $f_i:\real^d\to\real$ and to stochastic samples $\xi_i$ used to evaluate gradients of the loss function $F_i(\cdot;\xi_i)$; the stochasticity may represent local data sampling or measurement noise. By exchanging information only with their current neighbors, the agents seek a common solution to \eqref{min_main} without requiring a central coordinator.

\noindent{\bfseries\small TV-GT algorithm.} Gradient tracking \cite{xu2015augmented,di2016next,nedic2017achieving} augments decentralized gradient descent \cite{nedic2009distributed,sayed2014nowbook} with a dynamic estimate of the aggregate gradient to eliminate a bias caused by differences among local objectives. In this work, we study stochastic gradient tracking (GT) over  time-varying communication graphs. At iteration $k$, agent $i$ communicates with its current neighbors according to the mixing weights $\{w_{ij}^k\}_{j=1}^{N}$, where $w_{ij}^k>0$ only if agent $j$ is a neighbor of agent $i$ at time $k$ or $j=i$, and $w_{ij}^k=0$ otherwise. Each agent $i$ maintains a local primal variable $x_i^k\in\real^d$ and a gradient-tracking variable $g_i^k\in\real^d$, and performs \begin{subequations}
	 \label{tv_gt_algorithm_agent} 
	 \begin{align} x_i^{k+1} &= \sum_{j=1}^{N} w_{ij}^k ( x_j^k-\alpha g_j^k ) \label{tv_gt_algorithm_agent_x} \\ g_i^{k+1} &= \sum_{j=1}^{N} w_{ij}^k g_j^k + \grad F_i(x_i^{k+1};\xi_i^{k+1}) - \grad F_i(x_i^k;\xi_i^k), \label{tv_gt_algorithm_agent_g}
	  \end{align} 
  \end{subequations}
   with initialization $g_i^0 = \grad F_i(x_i^0;\xi_i^0)$ and $x_i^0\in\real^d$.  The communication graph and the mixing weights $\{w_{ij}^k\}$ may vary with $k$ and are assumed to satisfy Assumption~\ref{assumption:tv_gt_network}.

\subsection{Related work}
\label{sec:related_work}
Decentralized optimization over time-varying networks dates back to early
distributed subgradient methods. The works
\cite{nedic2009distributed,ram2010distributed} analyze distributed
(sub)gradient methods over graph sequences for which connectivity is required
only over bounded communication windows, while the individual graphs may be
disconnected. Most later work on time-varying networks adopts this window-based
connectivity model. More recently,
\cite{koloskova2020unified} analyzes decentralized SGD (DSGD) under general
time-varying topologies and shows that DSGD achieves a leading variance term reduced by a factor proportional
to the number of agents, \ie, linear speedup. DSGD, however, does not
correct the bias induced by heterogeneous local objectives. Consequently,
with constant stepsizes its error generally contains a persistent
heterogeneity-dependent term, while exact convergence typically requires
diminishing stepsizes \cite{yuan2020influence}.

Gradient tracking (GT) removes this steady-state bias by
dynamically tracking the aggregate gradient \cite{xu2015augmented,di2016next}. Many works have analyzed GT over time-varying networks
\cite{di2016next,nedic2017achieving,sundararajan2020analysis,maros2020geometrically}. These works, however, focus on deterministic gradients and establish exact convergence under
window-based connectivity, but do not address our stochastic
gradient noise setting.

Recent work has begun to address stochastic GT over time-varying graphs
\cite{huang2025beyond,nguyen2025graphs,fainman2026convergence}.
The work~\cite{huang2025beyond} imposes an expected-connectivity condition
at each communication round, whereas \cite{nguyen2025graphs}
considers graph sequences with finite-time consensus. The work
\cite{fainman2026convergence} replaces exact finite-time consensus by an
approximate variant under symmetric doubly stochastic mixing. These settings
do not cover Assumption~\ref{assumption:tv_gt_network}, which allows
nonsymmetric mixing matrices, disconnected individual graphs, and an
arbitrary window contraction factor $\lambda<1$.

Directed networks, where double stochasticity is generally unavailable, are
handled by a modified family of GT methods
\cite{scutari2019distributed,saadatniaki2020decentralized}.
The work \cite{scutari2019distributed} extends NEXT to time-varying directed
graphs through the push-sum-based SONATA framework, while
\cite{saadatniaki2020decentralized} extends push--pull methods
\cite{xin2018linear,pu2020push} to time-varying directed networks and
establishes linear convergence for strongly convex objectives; see also
\cite{nedic2025ab}. These results, however, focus on deterministic gradients.

Stochastic GT over time-varying directed graphs has also been considered
recently \cite{chen2024accelerated,nguyen2024decentralized}. Push-sum,
gradient tracking, and momentum are combined in \cite{chen2024accelerated},
under the requirement that the communication graph be strongly connected at
every iteration; \cite{nguyen2024decentralized} pairs push--pull gradient
tracking with heavy-ball momentum and likewise assumes strong connectivity
of every individual graph. Neither analysis covers the mixing model of
Assumption~\ref{assumption:tv_gt_network}, which permits individual graphs
to be disconnected.

Beyond the differences in network assumptions and convergence guarantees,
our analysis also differs from prior work in how the time-varying network is
handled. Analyses based directly on window contraction typically unroll the
error over complete communication windows while also accounting for the
associated partial-window terms; see, \eg,
\cite{koloskova2020unified,fainman2026convergence}. We instead work in a time-varying quadratic norm under which the
window-mixing condition of Assumption~\ref{assumption:tv_gt_network} already
forces a strict decrease at every single iteration. This construction builds on
classical converse-Lyapunov theory for stable time-varying systems and is
closely related to the Lyapunov--Stein equation and converse Lyapunov results
for linear time-varying systems
\cite{stein1952some,anderson1981detectability,geiselhart2014alternative}.
Related ideas have also been used in networked control to combine joint
connectivity with time-varying quadratic Lyapunov functions
\cite{su2012stability,liu2017adaptive,liu2025distributed}. However, these works
consider {\em continuous-time} dynamics and problems different from
\eqref{min_main}. Here, we apply this construction directly to the
discrete-time disagreement mixing process underlying stochastic gradient
tracking and use the resulting time-varying quadratic norm to control the
coupled primal--tracking errors.

\subsection{Contributions}
\label{sec:contributions}

This paper analyzes stochastic gradient tracking
\eqref{tv_gt_algorithm_agent} under the uniform window-mixing condition in
Assumption~\ref{assumption:tv_gt_network}: every product of $\tau$
consecutive mixing matrices contracts disagreement by a factor
$\lambda<1$, while the individual matrices are required only to be
nonnegative and doubly stochastic. In particular, individual communication
graphs may be disconnected and need not provide contraction at every
iteration.

\begin{itemize}
	
	\item
	\textbf{One-step analysis under window contraction.}
	We associate the time-varying mixing process with a quadratic Lyapunov norm
	whose defining identity gives a strict decrease at every iteration. This
	holds even when the individual mixing matrices are not strictly contractive.
	After scaling the tracking variable appropriately, the same norm controls
	the primal and tracking disagreements jointly. The resulting analysis works
	directly at the iteration level and avoids unrolling the recursion over
	communication windows. This route differs from existing window-based analyses; we are not aware
	of prior work that applies this converse-Lyapunov construction to
	stochastic gradient tracking over time-varying networks.
	
\item
\textbf{Finite-time guarantees for strongly convex and convex objectives.}
For smooth strongly convex objectives, the leading stochastic term is
$\widetilde{\mathcal O}(\sigma^2/(NK))$, where $\sigma^2$ bounds the
gradient-noise variance. For smooth convex objectives, we obtain an ergodic
bound with leading term $\mathcal O(\sigma/\sqrt{NK})$. These terms match the
corresponding centralized mini-batch rates and yield linear speedup in the
number of agents.
	
\item
\textbf{Explicit network dependence.}
We characterize explicitly how the convergence bounds depend on the
window length $\tau$ and the window contraction factor $\lambda$. In the
strongly convex case, the coefficient of the higher-order stochastic
term scales as $
\frac{\tau^3}{(1-\lambda)^3}$,
and the admissible stepsize and transient length are also characterized
explicitly in terms of $\tau$ and $1-\lambda$. These expressions make explicit how long the network-dependent transient
lasts before the centralized rate takes over.
	
\end{itemize}

\section{Assumptions and error dynamics}
In this section, we first state the assumptions on the objective functions, communication
network, and stochastic gradients. We then derive the centroid and
disagreement dynamics that form the basis of the convergence analysis.

\subsection{Assumptions}

\begin{assumption}[\bf\small Functions] \rm
	\label{assumption:tv_gt_functions}
	Each local objective $f_i$ is $L$-smooth and $\mu$-strongly convex
	for some $0\leq\mu\leq L$, where $\mu=0$ corresponds to the convex case.
	We assume that $f$ admits a minimizer and let $x^\star$ denote any
	minimizer of $f$. For the strongly convex case ($\mu>0$), we define
	the condition number $
	\kappa\define\frac{L}{\mu}$.
\end{assumption}

\begin{assumption}[\bf\small Network] \rm
	\label{assumption:tv_gt_network}
	The sequence $\{W_k\}_{k\geq0}$ is deterministic, and each
	$W_k\in\real^{N\times N}$ is nonnegative and doubly stochastic ($w_{ij}^k\geq 0$, $
		W_k\one=\one$,
		$\one^\top W_k=\one^\top$).
	Define
	\begin{equation}
		\Pi
		\define
		\frac{1}{N}\one\one^\top,
		\qquad
		\widetilde W_k
		\define
		W_k-\Pi.
	\end{equation}
	There exist an integer $\tau\geq1$ and a constant
	$\lambda\in[0,1)$ such that every product of $\tau$ consecutive
	mixing matrices contracts disagreement:
	\begin{equation}
		\label{tv_gt_window_contraction}
		\left\|
		\widetilde W_{k+\tau-1:k}
		\right\|_2
		\leq
		\lambda,
		\qquad
		\text{for all }k\geq0,
	\end{equation}
	where $\widetilde W_{t:s}\define\widetilde W_t
		\widetilde W_{t-1}
		\cdots
		\widetilde W_s$, $t\geq s$. We define the
	$\tau$-window spectral gap by $
		\Delta_\lambda
		\define
		1-\lambda
		\in(0,1]$.
\end{assumption}

Assumption~\ref{assumption:tv_gt_network} is a uniform window-mixing
condition: contraction is required only over products of $\tau$
consecutive mixing matrices, while an individual matrix $W_k$ need not
contract disagreement. Consequently, the communication graph may be
disconnected at individual iterations. The parameters $\tau$ and
$\Delta_\lambda$ quantify the effect of the time-varying network on the
convergence rate: smaller $\tau$ and larger $\Delta_\lambda$ correspond
to faster information mixing.

\begin{remark}[\bf\small Aligned-window contraction] \rm
	\label{remark:tv_gt_aligned}
	Condition~\eqref{tv_gt_window_contraction} requires contraction for
	windows starting at every iteration. A closely related formulation
	imposes contraction only on aligned blocks $
	\{\ell\tau_0,\ldots,(\ell+1)\tau_0-1\}$, $\ell\geq0$,
	as in the deterministic setting considered in
	\cite{koloskova2020unified}. Every window of length $2\tau_0-1$
	contains at least one complete aligned $\tau_0$-block. Since the
	remaining doubly stochastic mixing factors are nonexpansive, aligned
	$\tau_0$-window contraction with factor $\lambda<1$ implies
	\eqref{tv_gt_window_contraction} with $
	(\tau,\lambda)
	=
	(2\tau_0-1,\lambda)$;
	see also \cite{nedic2017achieving}.
\end{remark}

We let $\cF^k$ denote the information available immediately before the
stochastic samples $\{\xi_i^k\}_{i=1}^N$ are drawn at iteration $k$; in
particular, $\cF^k$ contains the initialization and all iterates and
stochastic samples generated up to iteration $k-1$.
\begin{assumption}[\bf\small Gradient noise] \rm
	\label{assumption:tv_gt_noise}
	Conditioned on $\cF^k$, the samples
	$\xi_1^k,\ldots,\xi_N^k$ are mutually independent and, for every
	$i$, satisfy
	\begin{subequations}
		\begin{align}
			\Ex\left[
			\grad F_i(x_i^k;\xi_i^k)
			-\grad f_i(x_i^k)
			\mid\cF^k
			\right]
			&=0
			\\
			\Ex\left[
			\left\|
			\grad F_i(x_i^k;\xi_i^k)
			-\grad f_i(x_i^k)
			\right\|^2
			\mid\cF^k
			\right]
			&\leq\sigma^2.
		\end{align}
	\end{subequations}
	The initial iterates $\{x_i^0\}_{i=1}^N$ are deterministic.
\end{assumption}

\subsection{Network form}

\noindent{\bfseries\small Network notation.}
For the analysis, we write \eqref{tv_gt_algorithm_agent} in stacked form.
Define
\begin{subequations}
	\label{def_network_x_grad}
	\begin{align}
		\x
		&\define
		\col\{x_1,\ldots,x_N\}
		\in\real^{Nd}
		\\
		\g
		&\define
		\col\{g_1,\ldots,g_N\}
		\in\real^{Nd}
		\\
		\grad\F(\x;\bxi)
		&\define
		\col\{\grad F_i(x_i;\xi_i)\}_{i=1}^N
		\in\real^{Nd}
		\\
		\grad\f(\x)
		&\define
		\col\{\grad f_i(x_i)\}_{i=1}^N
		\in\real^{Nd} \\
				\W_k
		&\define
		W_k\otimes I_d.
		\label{def_network_W}
	\end{align}
\end{subequations}

\noindent{\bfseries\small TV-GT algorithm.}
With the notation above, \eqref{tv_gt_algorithm_agent} becomes
\begin{subequations}
	\label{tv_gt_algorithm}
	\begin{align}
		\x^{k+1}
		&=
		\W_k(\x^k-\alpha\g^k)
		\label{tv_gt_algorithm_x}
		\\
		\g^{k+1}
		&=
		\W_k\g^k
		+
		\grad\F(\x^{k+1};\bxi^{k+1})
		-
		\grad\F(\x^k;\bxi^k),
		\label{tv_gt_algorithm_g}
	\end{align}
\end{subequations}
with $
\g^0=\grad\F(\x^0;\bxi^0)$
and arbitrary $\x^0$. We now separate the network dynamics into the evolution of the network
centroid and the disagreement among the agents.

\subsection{Centroid and disagreement}
\label{sec:tv_gt_centroid_disagreement}

\noindent{\bfseries\small Change of coordinates.}
Define
\begin{equation}
	\label{tv_gt_y_definition}
	\y^k
	\define
	\g^k-\grad\F(\x^k;\bxi^k).
\end{equation}
The initialization of $\g^0$ implies $\y^0=\zero$. This change of
coordinates removes the difference of consecutive stochastic gradients
from the tracking recursion. Substituting
\eqref{tv_gt_y_definition} into \eqref{tv_gt_algorithm} gives
\begin{subequations}
	\label{tv_gt_coordinates}
	\begin{align}
		\x^{k+1}
		&=
		\W_k
		(
		\x^k
		-\alpha\grad\F(\x^k;\bxi^k)
		-\alpha\y^k
		)
		\label{tv_gt_coordinates_x}
		\\
		\y^{k+1}
		&=
		\W_k\y^k
		-
		(\I-\W_k)
		\grad\F(\x^k;\bxi^k).
		\label{tv_gt_coordinates_y}
	\end{align}
\end{subequations}
For the centroid and disagreement analysis, define
\begin{subequations}
	\label{tv_gt_error_definitions}
	\begin{align}
				\mathbf\Pi
		&\define
		\Pi\otimes I_d
		=
		\left(\frac1N\one\one^\top\right)\otimes I_d
		\\
		\widetilde{\W}_k
		&\define
		\W_k-\mathbf\Pi
		=
		\widetilde W_k\otimes I_d \\
		\bar x^k
		&\define
		\frac1N(\one^\top\otimes I_d)\x^k
		=
		\frac1N\sum_{i=1}^N x_i^k
		\\
		\overline{\grad\f}(\x^k)
		&\define
		\frac1N\sum_{i=1}^N\grad f_i(x_i^k)
		\label{tv_gt_average_gradient}
		\\
		\widehat{\x}^k
		&\define
		(\I-\mathbf\Pi)\x^k
		\label{tv_gt_xhat_definition}
		\\
		\u^k
		&\define
		\grad\f(\x^k)-\grad\f(\x^\star),
		\quad
		\x^\star\define\one\otimes x^\star
		\label{tv_gt_u_definition}
		\\
		\widetilde{\y}^k
		&\define
		\y^k+\grad\f(\x^\star)
		\label{tv_gt_ytilde_definition} \\
			\label{grad_noise}
		\v^k
		&\define
		\grad\F(\x^k;\bxi^k)-\grad\f(\x^k),
		\quad
		\bar v^k
		\define
		\frac1N(\one^\top\otimes I_d)\v^k.
	\end{align}
\end{subequations}

\begin{lemma}[\bf\small Centroid and disagreement dynamics]
	\label{lemma:tv_gt_error_dynamics} \rm
	The centroid evolves as
	\begin{equation}
		\label{tv_gt_centroid}
		\bar x^{k+1}
		=
		\bar x^k
		-\alpha\overline{\grad\f}(\x^k)
		-\alpha\bar v^k,
	\end{equation}
	while the disagreement satisfies
	\begin{subequations}
		\label{tv_gt_centered}
		\begin{align}
			\widehat{\x}^{k+1}
			&=
			\widetilde{\W}_k
			(
			\widehat{\x}^k
			-\alpha\widetilde{\y}^k
			-\alpha\u^k
			-\alpha\v^k
			)
			\label{tv_gt_centered_x_primal}
			\\
			\widetilde{\y}^{k+1}
			&=
			\widetilde{\W}_k\widetilde{\y}^k
			-
			(\I-\W_k)(\u^k+\v^k).
			\label{tv_gt_centered_x_dual}
		\end{align}
	\end{subequations}
\end{lemma}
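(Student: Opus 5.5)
The proof is direct algebra on the transformed recursion \eqref{tv_gt_coordinates}. It uses three facts that follow from double stochasticity of $W_k$ (Assumption~\ref{assumption:tv_gt_network}): $(\one^\top\otimes I_d)\W_k=\one^\top\otimes I_d$; $\mathbf\Pi\W_k=\W_k\mathbf\Pi=\mathbf\Pi$, so $(\I-\mathbf\Pi)\W_k=\widetilde{\W}_k=\W_k(\I-\mathbf\Pi)$ and $\widetilde{\W}_k\mathbf\Pi=\zero$; and $(\I-\W_k)\mathbf\Pi=\zero$.

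\textbf{Centroid.} I will first show that $\y^k$ has zero mean for every $k$. Apply $\frac1N(\one^\top\otimes I_d)$ to \eqref{tv_gt_coordinates_y}. The first term gives $\bar y^k$. The second term is annihilated, because $\one^\top(I-W_k)=0$. Hence $\bar y^{k+1}=\bar y^k$, and since $\y^0=\zero$ we get $\bar y^k=0$ for all $k$. Now average \eqref{tv_gt_coordinates_x}. The factor $\W_k$ disappears, and
\[
\frac1N(\one^\top\otimes I_d)\grad\F(\x^k;\bxi^k)=\overline{\grad\f}(\x^k)+\bar v^k
\]
by definition \eqref{grad_noise}. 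Together with $\bar y^k=0$, this yields \eqref{tv_gt_centroid}.

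\textbf{Primal disagreement.} Multiply \eqref{tv_gt_coordinates_x} by $\I-\mathbf\Pi$. This gives $\widetilde{\W}_k(\x^k-\alpha\grad\F(\x^k;\bxi^k)-\alpha\y^k)$, and $\widetilde{\W}_k\x^k=\widetilde{\W}_k\widehat{\x}^k$ because $\widetilde{\W}_k\mathbf\Pi=\zero$. Next write
\[
\grad\F(\x^k;\bxi^k)+\y^k=\u^k+\v^k+\widetilde{\y}^k,
\]
which holds because $\u^k+\widetilde{\y}^k=\grad\f(\x^k)+\y^k$: the two copies of $\grad\f(\x^\star)$ cancel. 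Substituting gives \eqref{tv_gt_centered_x_primal}.

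\textbf{Tracking variable.} Add $\grad\f(\x^\star)$ to both sides of \eqref{tv_gt_coordinates_y} and substitute $\grad\F(\x^k;\bxi^k)=\u^k+\v^k+\grad\f(\x^\star)$. This gives
\[
\widetilde{\y}^{k+1}=\W_k\widetilde{\y}^k-(\I-\W_k)(\u^k+\v^k)+\bigl[\grad\f(\x^\star)-\W_k\grad\f(\x^\star)-(\I-\W_k)\grad\f(\x^\star)\bigr],
\]
and the bracket is identically zero. It remains to replace $\W_k\widetilde{\y}^k$ by $\widetilde{\W}_k\widetilde{\y}^k$, which requires $\mathbf\Pi\widetilde{\y}^k=\zero$.

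\textbf{Main obstacle: showing $\mathbf\Pi\widetilde{\y}^k=\zero$.} We already have $\mathbf\Pi\y^k=\zero$. So the claim reduces to $\mathbf\Pi\grad\f(\x^\star)=\zero$, that is, $\sum_i\grad f_i(x^\star)=N\grad f(x^\star)=0$. This is first-order optimality of the minimizer $x^\star$ of the convex differentiable $f$ from Assumption~\ref{assumption:tv_gt_functions}. With it, $\W_k\widetilde{\y}^k=\widetilde{\W}_k\widetilde{\y}^k+\mathbf\Pi\widetilde{\y}^k=\widetilde{\W}_k\widetilde{\y}^k$, which gives \eqref{tv_gt_centered_x_dual}.
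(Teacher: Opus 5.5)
Your proposal is correct and follows essentially the same route as the paper. Both arguments use the same steps: zero mean of $\y^k$ from double stochasticity and $\y^0=\zero$, averaging \eqref{tv_gt_coordinates_x} for the centroid, applying $\I-\mathbf\Pi$ for the primal disagreement, and substituting $\y^k=\widetilde{\y}^k-\grad\f(\x^\star)$ into \eqref{tv_gt_coordinates_y} together with $\mathbf\Pi\widetilde{\y}^k=\zero$ (from $\grad f(x^\star)=0$) for the tracking recursion. You are also slightly more explicit than the paper on one point: you note $\widetilde{\W}_k\mathbf\Pi=\zero$ to justify replacing $\x^k$ by $\widehat{\x}^k$, which the paper leaves implicit.
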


\begin{proof}
	By double stochasticity, $
	\frac1N(\one^\top\otimes I_d)\W_k
	=
	\frac1N(\one^\top\otimes I_d)$.
	Averaging \eqref{tv_gt_coordinates_y} and using $\y^0=\zero$ therefore
	gives $
	\frac1N(\one^\top\otimes I_d)\y^k=0$,
	$k\geq0$. 	Therefore, averaging \eqref{tv_gt_coordinates_x} and using
	\eqref{grad_noise} gives \eqref{tv_gt_centroid}.
	
	For the disagreement dynamics, note that $x^\star$ minimizes $f$,
	$\mathbf\Pi\grad\f(\x^\star)=\zero$, and hence
	$\mathbf\Pi\widetilde{\y}^k=\zero$. 	Moreover, $\grad\F(\x^k;\bxi^k)
	=\grad\f(\x^\star)+\u^k+\v^k$.
	Applying $\I-\mathbf\Pi$ to \eqref{tv_gt_coordinates_x} and using
	$(\I-\mathbf\Pi)\W_k=\widetilde{\W}_k$ gives
	\eqref{tv_gt_centered_x_primal}. Finally, substituting
	$\y^k=\widetilde{\y}^k-\grad\f(\x^\star)$ into
	\eqref{tv_gt_coordinates_y} yields
	\begin{equation}
		\label{tv_gt_ytilde_derivation}
		\widetilde{\y}^{k+1}
		=
		\W_k\widetilde{\y}^k
		-
		(\I-\W_k)(\u^k+\v^k).
	\end{equation}
	Since $\mathbf\Pi\widetilde{\y}^k=\zero$,
	$\W_k\widetilde{\y}^k
	=\widetilde{\W}_k\widetilde{\y}^k$, which gives
	\eqref{tv_gt_centered_x_dual}.
\end{proof}
\noindent The centroid recursion \eqref{tv_gt_centroid} is independent of the
particular mixing matrix $W_k$; the effect of the communication network is
therefore captured through the coupled disagreement dynamics
\eqref{tv_gt_centered}.

\subsection{Joint disagreement dynamics}

To balance the primal and tracking disagreements, define the scaled tracking disagreement
\begin{equation}
	\label{tv_gt_z_eta_choice}
	\mathbf z^k
	\define
	\frac{\alpha}{\eta}\widetilde{\y}^k, \quad 	\eta
	\define
	\frac{\Delta_\lambda}{2\tau}.
\end{equation}
Using \eqref{tv_gt_centered}, the two disagreement variables evolve as
\begin{subequations}
	\label{tv_gt_scaled_disagreement}
	\begin{align}
		\widehat{\x}^{k+1}
		&=
		\widetilde{\W}_k
		\left(
		\widehat{\x}^k
		-\eta\mathbf z^k
		-\alpha\u^k
		-\alpha\v^k
		\right)
		\label{tv_gt_scaled_disagreement_x}
		\\
		\mathbf z^{k+1}
		&=
		\widetilde{\W}_k\mathbf z^k
		-
		\frac{\alpha}{\eta}
		(\I-\W_k)(\u^k+\v^k).
		\label{tv_gt_scaled_disagreement_z}
	\end{align}
\end{subequations}
The scaling in \eqref{tv_gt_z_eta_choice} is introduced solely for the analysis. Its specific choice ensures that, once the one-step Lyapunov dissipation coefficient $\delta=	\frac{1-\lambda^2}{\tau}$ is established below, we have $\eta\leq \delta/2$. This allows the coupling between the primal and tracking disagreements to be absorbed by the network dissipation in the joint Lyapunov recursion.

\section{One-step Lyapunov analysis}
\label{sec:tv_gt_one_step_lyapunov}
This section builds, from the window-mixing condition alone, a time-varying
quadratic norm in which the disagreement dynamics dissipate at every
iteration; the remainder of the GT analysis is carried out in that norm.

\subsection{Time-varying quadratic Lyapunov norm}

Define the state-transition matrix of the lifted disagreement mixing process by
\begin{equation}
	\label{tv_gt_W_transition}
	\mathbf{\Psi}(t,s)
	\define
	\begin{cases}
		\widetilde{\W}_{t-1}\widetilde{\W}_{t-2}\cdots\widetilde{\W}_{s},
		& t>s,
		\\[1mm]
		\I,
		& t=s.
	\end{cases}
\end{equation}
For each $k\geq0$, define
\begin{equation}
	\label{tv_gt_P_definition}
	\P_k
	\define
	\sum_{r=0}^{\infty}
	\mathbf{\Psi}(k+r,k)^{\top}\mathbf{\Psi}(k+r,k).
\end{equation}
The quadratic form induced by $\P_k$ measures the accumulated future
disagreement energy starting from time $k$. The window contraction ensures
that this series is finite and uniformly bounded.

\begin{lemma}[\bf\small Time-varying quadratic mixing norm] \rm
	\label{lemma:tv_gt_R}
	Under Assumption~\ref{assumption:tv_gt_network}, define
	\begin{equation}
		\label{tv_gt_R_delta_definition}
		\R_k
		\define
		\delta\P_k,
		\qquad
		\delta
		\define
		\frac{1-\lambda^2}{\tau}.
	\end{equation}
	Then, for every $k\geq0$,
	\begin{subequations}
		\label{tv_gt_R_properties}
		\begin{align}
			\delta\I
			&\preceq
			\R_k
			\preceq
			\I,
			\label{tv_gt_R_equivalence}
			\\
			\widetilde{\W}_{k}^{\top}
			\R_{k+1}
			\widetilde{\W}_{k}
			&=
			\R_k-\delta\I.
			\label{tv_gt_R_lyap}
		\end{align}
	\end{subequations}
	Moreover, with $\Delta_\lambda\define1-\lambda$,
	\begin{equation}
		\label{tv_gt_delta_gap}
		\frac{\Delta_\lambda}{\tau}
		\leq
		\delta
		=
		\frac{\Delta_\lambda(1+\lambda)}{\tau}
		\leq
		\frac{2\Delta_\lambda}{\tau},
		\qquad
		0<\delta\leq\frac1\tau\leq1.
	\end{equation}
\end{lemma}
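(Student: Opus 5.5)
Our plan is to first show that the series defining $\P_k$ converges, with uniform bounds, and then read off both properties in \eqref{tv_gt_R_properties} from the structure of the series.

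\textbf{Step 1 (convergence and upper bound).} Each $\widetilde W_t=W_t-\Pi$ satisfies $\|\widetilde W_t\|_2\le 1$. Indeed, $W_t$ is doubly stochastic and nonnegative, hence $\|W_t\|_2\le1$ (use $\|W\|_2^2\le\|W\|_1\|W\|_\infty=1$), and $\widetilde W_t=W_t(I-\Pi)$ because $W_t\Pi=\Pi=\Pi W_t$; so $\|\widetilde W_t\|_2\le\|W_t\|_2\|I-\Pi\|_2\le1$. The same identity shows that a product of consecutive $\widetilde W$'s is the corresponding product of $W$'s times $(I-\Pi)$. For $r=m\tau+j$ with $0\le j<\tau$, we split $\mathbf\Psi(k+r,k)$ into $m$ complete windows of length $\tau$ (each of norm $\le\lambda$ by \eqref{tv_gt_window_contraction}, and the Kronecker factor $I_d$ does not change norms) and a partial product of norm $\le1$. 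This gives $\|\mathbf\Psi(k+r,k)\|_2\le\lambda^{m}$. Hence
\[
\P_k\preceq\sum_{m\ge0}\sum_{j=0}^{\tau-1}\lambda^{2m}\I=\frac{\tau}{1-\lambda^2}\I=\frac1\delta\I.
\]
The series converges absolutely in operator norm, and multiplying by $\delta$ gives $\R_k\preceq\I$.

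\textbf{Step 2 (lower bound).} The $r=0$ term is $\I$, and every other term is positive semidefinite. So $\P_k\succeq\I$, i.e.\ $\R_k\succeq\delta\I$.

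\textbf{Step 3 (Lyapunov identity).} We use the cocycle property $\mathbf\Psi(k+1+r,k+1)\widetilde\W_k=\mathbf\Psi(k+1+r,k)$. Then
\[
\widetilde\W_k^\top\P_{k+1}\widetilde\W_k=\sum_{r\ge0}\mathbf\Psi(k+1+r,k)^\top\mathbf\Psi(k+1+r,k)=\P_k-\I.
\]
Conjugating term by term is legitimate because the series converges absolutely. Multiplying by $\delta$ yields \eqref{tv_gt_R_lyap}.

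\textbf{Step 4 (constants).} We have $1-\lambda^2=(1-\lambda)(1+\lambda)$ with $1\le1+\lambda<2$, which gives the two-sided bound on $\delta$ in \eqref{tv_gt_delta_gap}. Since $\Delta_\lambda\le1$ and $\lambda\ge0$, we get $\delta\le1/\tau\le1$, and $\delta>0$ because $\lambda<1$.

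The only delicate point is Step 1. It requires the nonexpansiveness of a single $\widetilde W_t$, which needs $\Pi$ to commute with $W_t$ (double stochasticity), and a careful decomposition of an arbitrary-length product into full windows starting at $k$ plus a remainder. Here it is essential that \eqref{tv_gt_window_contraction} holds for windows starting at every index.
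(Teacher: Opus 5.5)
Your proposal is correct and follows the paper's proof essentially step for step. It uses the same block decomposition $r=\ell\tau+t$, giving $\|\mathbf\Psi(k+r,k)\|_2\le\lambda^{\lfloor r/\tau\rfloor}$ and hence $\I\preceq\P_k\preceq\delta^{-1}\I$; the same factorization $\mathbf\Psi(k+r,k)=\mathbf\Psi(k+r,k+1)\widetilde\W_k$ with an index shift, giving $\P_k=\I+\widetilde\W_k^\top\P_{k+1}\widetilde\W_k$; and $1-\lambda^2=\Delta_\lambda(1+\lambda)$ for the constants. One small wording fix: $\delta\le1/\tau$ follows simply from $1-\lambda^2\le1$, not from $\Delta_\lambda\le1$, which together with $1+\lambda<2$ would only give $\delta<2/\tau$.
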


\begin{proof}
	See Appendix~\ref{app:lemma:tv_gt_R}.
\end{proof}

Equation \eqref{tv_gt_R_lyap} implies that the time-varying quadratic Lyapunov
function $
V_k(\q)\define\norm{\q}_{\R_k}^2$ applied to the homogeneous disagreement dynamics
$\q^{k+1}=\widetilde{\W}_k\q^k$ gives the exact one-step decrease
\[
V_{k+1}(\q^{k+1})
=
V_k(\q^k)-\delta\norm{\q^k}^2.
\]
Thus, although $\widetilde{\W}_k$ need not contract disagreement in the
Euclidean norm at every iteration, it is strictly dissipative in the
time-varying Lyapunov norm: the window-scale mixing property reappears as a
per-iteration decrease.
\begin{remark}[\bf\small Converse Lyapunov construction] \rm
	\label{remark:tv_gt_converse_lyapunov}
	The construction of $\P_k$ is motivated by classical converse-Lyapunov
	arguments for stable linear time-varying systems and is closely related to
	the Lyapunov--Stein equation and converse Lyapunov theory
	\cite{stein1952some,anderson1981detectability,
		geiselhart2014alternative}. Related ideas have also appeared in
	{\em continuous-time} multi-agent systems
	\cite{liu2017adaptive,liu2025distributed}. Here, we apply this construction
	directly to the discrete-time disagreement process underlying stochastic
	gradient tracking.
\end{remark}
\begin{remark}[\bf\small Per-step contraction] \rm
	When $\tau=1$, Assumption~\ref{assumption:tv_gt_network} implies $
		\|\widetilde{\W}_k\|_2
		\leq
		\lambda
		<1$, for every $k\geq0$.
	Hence, the Euclidean norm itself yields the one-step decrease $
		\|\widetilde{\W}_k\q\|^2
		\leq
		\lambda^2\|\q\|^2
		=
		\|\q\|^2
		-
		(1-\lambda^2)\|\q\|^2$.
	Therefore, in this case, a time-varying quadratic norm is not needed to
	obtain a per-iteration contraction. The construction in
	Lemma~\ref{lemma:tv_gt_R} is useful for the genuinely window-based case
	$\tau>1$.
\end{remark}
\subsection{Coupled error recursions}
Using the scaled variable in \eqref{tv_gt_z_eta_choice}, define
\begin{subequations}
	\label{tv_gt_disagreement_energy}
	\begin{align}
		\mathcal E_k
		&\define
		\|\widehat{\x}^k\|_{\R_k}^2
		+2\|\mathbf z^k\|_{\R_k}^2
		\\
		\widetilde{\mathcal E}_k
		&\define
		\|\widehat{\x}^k\|^2
		+2\|\mathbf z^k\|^2,
	\end{align}
\end{subequations}
and let
\begin{subequations}
	\label{tv_gt_ab_definition}
	\begin{align}
		\widetilde X_k
		&\define
		\Ex\|\bar x^k-x^\star\|^2
		\\
		E_k
		&\define
		\frac1N\Ex[\mathcal E_k],
		\qquad
		\widetilde E_k
		\define
		\frac1N\Ex[\widetilde{\mathcal E}_k].
	\end{align}
\end{subequations}
By \eqref{tv_gt_R_equivalence},
\begin{equation}
	\label{tv_gt_energy_sandwich}
	\delta\widetilde{\mathcal E}_k
	\leq
	\mathcal E_k
	\leq
	\widetilde{\mathcal E}_k,
\end{equation}
and hence
\begin{equation}
	\label{tv_gt_bs_sandwich}
	E_k
	\leq
	\widetilde E_k
	\leq
	\frac{E_k}{\delta},
	\qquad
	\frac1N\Ex\|\widehat{\x}^k\|^2
	\leq
	\widetilde E_k.
\end{equation}
Finally, define
\begin{equation}
	\label{tv_gt_Q_definition}
	Q
	\define
	1+\frac{8}{\eta^2}
	=
	1+\frac{32\tau^2}{\Delta_\lambda^2}.
\end{equation}
\begin{lemma}[\bf\small Coupled centroid--disagreement error] \rm
	\label{lemma:tv_gt_coupled}
	Under Assumptions~\ref{assumption:tv_gt_functions}--\ref{assumption:tv_gt_noise},
	if $0<\alpha\leq\frac1{4L}$, then, for every $k\geq0$,
	\begin{subequations}
		\label{tv_gt_coupled_recursion}
		\begin{align}
			\widetilde{X}_{k+1}
			&\leq
			(1-\mu\alpha)\widetilde{X}_k
			+\frac{3\alpha L}{2}\widetilde E_k
			+\frac{\alpha^2\sigma^2}{N}
			\label{tv_gt_coupled_a}
			\\
			E_{k+1}
			&\leq
			E_k
			-\frac{\delta}{4}\widetilde E_k
			+\frac{10\alpha^2QL^2}{\delta}
			\bigl(\widetilde{X}_k+\widetilde E_k\bigr)
			+\alpha^2Q\sigma^2.
			\label{tv_gt_coupled_b}
		\end{align}
	\end{subequations}
\end{lemma}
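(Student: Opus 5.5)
The proof splits into the centroid bound \eqref{tv_gt_coupled_a} and the disagreement bound \eqref{tv_gt_coupled_b}; in both we take conditional expectations given $\cF^k$.

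\textbf{Centroid bound.} By \eqref{tv_gt_centroid} and Assumption~\ref{assumption:tv_gt_noise},
\[
\Ex\bigl[\|\bar x^{k+1}-x^\star\|^2\mid\cF^k\bigr]=\|\bar x^k-x^\star-\alpha\overline{\grad\f}(\x^k)\|^2+\alpha^2\Ex\bigl[\|\bar v^k\|^2\mid\cF^k\bigr].
\]
Conditional independence across agents gives $\Ex[\|\bar v^k\|^2\mid\cF^k]\le\sigma^2/N$. For the deterministic part, expand the square. The inner product $-2\alpha\langle\bar x^k-x^\star,\frac1N\sum_i\grad f_i(x_i^k)\rangle$ is handled agent by agent. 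Strong convexity and smoothness of each $f_i$, applied via the three-point bound
\[
\langle \grad f_i(x_i),\bar x-x^\star\rangle\ge f_i(\bar x)-f_i(x^\star)+\tfrac{\mu}{4}\|\bar x-x^\star\|^2-\tfrac{L}{2}\|x_i-\bar x\|^2
\]
or a similar variant, yield $f(\bar x^k)-f(x^\star)$ terms plus $\frac{L}{N}\|\widehat\x^k\|^2$ errors. The squared gradient term $\alpha^2\|\overline{\grad\f}\|^2\le 2\alpha^2L^2\frac1N\|\widehat\x^k\|^2+4\alpha^2 L(f(\bar x^k)-f^\star)$ is absorbed by the negative function-gap term because $\alpha\le 1/(4L)$. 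This leaves $(1-\mu\alpha)\widetilde X_k$ plus a multiple of $\alpha L\frac1N\|\widehat\x^k\|^2\le \alpha L\,\widetilde E_k$. The constant $3/2$ comes from bookkeeping ($L/2$ from smoothness, plus $L$ from Young and the $\alpha^2L^2\le\alpha L/4$ term).

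\textbf{Disagreement bound.} Write $\widehat\x^{k+1}=\widetilde\W_k\mathbf a$ with $\mathbf a=\widehat\x^k-\eta\mathbf z^k-\alpha\u^k-\alpha\v^k$. By \eqref{tv_gt_R_lyap}, $\|\widehat\x^{k+1}\|_{\R_{k+1}}^2=\|\mathbf a\|_{\R_k}^2-\delta\|\mathbf a\|^2$. Since $\R_k$ is deterministic and $\v^k$ has conditional mean zero, the cross terms with $\v^k$ vanish. Then use Young's inequality $\|\mathbf p+\mathbf e\|_{\R}^2\le(1+\epsilon)\|\mathbf p\|_{\R}^2+(1+1/\epsilon)\|\mathbf e\|_{\R}^2$ with $\mathbf p=\widehat\x^k$, $\mathbf e=-\eta\mathbf z^k-\alpha\u^k$, and bound $\|\mathbf e\|_{\R_k}\le\|\mathbf e\|$ by \eqref{tv_gt_R_equivalence}. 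Here we keep only $-\delta\|\mathbf a\|^2\le-\frac{\delta}{2}\|\widehat\x^k\|^2+\delta\|\mathbf e\|^2$ (or drop it and use instead $\|\mathbf p\|_{\R}^2-\delta\|\mathbf p\|^2$, by choosing to apply the identity to $\widehat\x^k$ and perturbation separately).

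The cleaner route is to apply \eqref{tv_gt_R_lyap} to the homogeneous part and treat the perturbation in $\R_{k+1}$ via $\|\widetilde\W_k\mathbf e\|_{\R_{k+1}}\le\|\widetilde\W_k\mathbf e\|\le\|\mathbf e\|$. This gives
\[
\|\widetilde\W_k\widehat\x^k+\widetilde\W_k\mathbf e\|^2_{\R_{k+1}}\le(1+\epsilon)\bigl(\|\widehat\x^k\|^2_{\R_k}-\delta\|\widehat\x^k\|^2\bigr)+(1+1/\epsilon)\|\mathbf e\|^2 .
\]
Choosing $\epsilon=\delta/2$ and using $\|\widehat\x^k\|^2_{\R_k}\le\|\widehat\x^k\|^2$ gives net dissipation $-\frac{\delta}{2}\|\widehat\x^k\|^2$ at cost $\frac{3}{\delta}(2\eta^2\|\mathbf z^k\|^2+2\alpha^2\|\u^k\|^2)$. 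The same argument for $\mathbf z$ with perturbation $\frac{\alpha}{\eta}(\I-\W_k)(\u^k+\v^k)$, using $\|\I-\W_k\|\le2$, gives $-\frac{\delta}{2}\|\mathbf z^k\|^2+\frac{c\alpha^2}{\delta\eta^2}\|\u^k\|^2+\frac{4\alpha^2}{\eta^2}\|\v^k\|^2$. The noise terms contribute $\alpha^2 N\sigma^2$ and $\frac{8\alpha^2}{\eta^2}N\sigma^2$, whose sum over $N$ is $\alpha^2Q\sigma^2$, which explains $Q$.

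\textbf{Main obstacle.} The key step is absorbing the coupling term $\frac{6\eta^2}{\delta}\|\mathbf z^k\|^2$ into the $\mathbf z$-dissipation $2\cdot\frac{\delta}{2}\|\mathbf z^k\|^2$. With $\eta\le\delta/2$ we get $6\eta^2/\delta\le 3\delta/2$, which is too large against $\delta$, so the factor $2$ weighting $\mathbf z$ in $\mathcal E_k$ and the precise Young split must be tuned (e.g.\ $\epsilon$ chosen differently for $\eta\mathbf z$ versus $\alpha\u$) so that the residual dissipation is at least $\frac{\delta}{4}\widetilde{\mathcal E}_k$. Finally, bound $\|\u^k\|^2\le 2L^2\|\widehat\x^k\|^2+2NL^2\|\bar x^k-x^\star\|^2$ by smoothness. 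This produces the term $\frac{10\alpha^2QL^2}{\delta}(\widetilde X_k+\widetilde E_k)$, since $\frac1\delta(\cdot)(1+\frac{c}{\eta^2})\lesssim Q/\delta$. Taking total expectation and dividing by $N$ gives \eqref{tv_gt_coupled_b}.
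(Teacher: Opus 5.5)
\paragraph{Verdict.} Your overall architecture matches the paper's: condition on $\cF^k$, use the identity \eqref{tv_gt_R_lyap}, let the zero-mean noise cross terms vanish so that the noise contributes exactly $\alpha^2QN\sigma^2$, handle $\alpha\u^k$ by Young with a parameter of order $\delta$, and finish with $\|\u^k\|^2\le2L^2\|\widehat\x^k\|^2+2NL^2\|\bar x^k-x^\star\|^2$. However, the step you call the main obstacle is the heart of the lemma, and the proposal does not carry it out. With your stated choices ($\epsilon=\delta/2$ on both blocks and an even split of $\eta\mathbf z^k+\alpha\u^k$), the coupling cost $6\eta^2/\delta$ can reach $3\delta/2$, while the available $\mathbf z$-dissipation is only $\delta$. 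You note this yourself, and saying that the constants ``must be tuned'' is not an argument.

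\paragraph{Why your route struggles.} Treating $\eta\mathbf z^k$ as a Young perturbation of the $\widehat\x$-dynamics with $\epsilon\sim\delta$ amplifies it to $\eta^2/\delta$. Because $\eta\sim\delta$, this is of the same order as the dissipation $\delta$, so the absorption hinges entirely on numerical factors.

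\paragraph{The paper's fix.} The paper keeps $-\eta\mathbf z^k$ inside the homogeneous part. With $\mathbf H_k=\widetilde\W_k^\top\R_{k+1}\widetilde\W_k=\R_k-\delta\I$, one has exactly
\[
\|\widetilde\W_k(\widehat\x^k-\eta\mathbf z^k)\|_{\R_{k+1}}^2+2\|\widetilde\W_k\mathbf z^k\|_{\R_{k+1}}^2
=\mathcal E_k-\delta\widetilde{\mathcal E}_k-2\eta(\widehat\x^k)^\top\mathbf H_k\mathbf z^k+\eta^2\|\mathbf z^k\|_{\mathbf H_k}^2 .
\]
Since $\mathbf H_k\preceq\I$, the cross term costs only $\eta(\|\widehat\x^k\|^2+\|\mathbf z^k\|^2)$, with no $1/\delta$ factor. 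Using $\eta\le\delta/2\le1/2$, this gives $\mathcal E_k-(\delta-\eta)\widetilde{\mathcal E}_k\le\mathcal E_k-\frac{\delta}{2}\widetilde{\mathcal E}_k$. Only $\alpha\u^k$ is then a perturbation, handled by a single Young split with $\beta=\delta/4$. By $\mathcal E_k\le\widetilde{\mathcal E}_k$, $(1+\frac{\delta}{4})(\mathcal E_k-\frac{\delta}{2}\widetilde{\mathcal E}_k)\le\mathcal E_k-\frac{\delta}{4}\widetilde{\mathcal E}_k$. The perturbation costs $\frac{5\alpha^2Q}{\delta}\|\u^k\|^2$ via $\|\widetilde\W_kq\|_{\R_{k+1}}^2+\frac{2}{\eta^2}\|(\I-\W_k)q\|_{\R_{k+1}}^2\le Q\|q\|^2$. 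This is why $\eta=\Delta_\lambda/(2\tau)$ is the natural scaling.

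\paragraph{Closing your route.} Your approach can in fact be completed, but only with explicit constants that you would need to exhibit. For instance:
\begin{itemize}
\item On the $\widehat\x$ block, take $\epsilon=3\delta/4$ with the split $\|\eta\mathbf z^k+\alpha\u^k\|^2\le\frac54\eta^2\|\mathbf z^k\|^2+5\alpha^2\|\u^k\|^2$, using $1+\frac{4}{3\delta}\le\frac{7}{3\delta}$.
\item On the $\mathbf z$ block, take $\epsilon=\delta/3$.
\end{itemize}
This leaves dissipation $\frac{\delta}{4}\|\widehat\x^k\|^2+(\frac43-\frac{35}{48})\delta\|\mathbf z^k\|^2\ge\frac{\delta}{4}\widetilde{\mathcal E}_k$. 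The $\u$-coefficient is $\frac{35}{3\delta}+\frac{32}{\delta\eta^2}\le\frac{5Q}{\delta}$, using $\eta\le1/2$. Together these recover the stated constants. Without such a verification, the proof is incomplete at its key step.

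\paragraph{Centroid bound.} The per-agent inequality you wrote is false as stated. Take $f_i(x)=\frac{L}{2}\|x-x^\star\|^2$ (so $\mu=L$) and $x_i=x^\star\neq\bar x$: the left side is $0$, while the right side is $\frac{L}{4}\|\bar x-x^\star\|^2$. Even a corrected $\mu/4$ variant would only give $1-\mu\alpha/2$. The paper instead keeps $\frac{\mu}{2}\|x_i^k-x^\star\|^2$, averages over $i$, and uses $\frac1N\sum_i\|x_i^k-x^\star\|^2\ge\|\bar x^k-x^\star\|^2$. This yields exactly $(1-\mu\alpha)$ with disagreement coefficient $\alpha L+2\alpha^2L^2\le\frac{3\alpha L}{2}$; no Young step is involved.
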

\begin{proof}
	See Appendix~\ref{app:lemma:tv_gt_coupled}.
\end{proof}
What Lemma~\ref{lemma:tv_gt_coupled} exploits is that a single time-varying
matrix $\R_k$ can serve as the quadratic norm for both the primal
disagreement $\widehat{\x}^k$ and the scaled tracking disagreement
$\mathbf z^k$.
The scaling $\mathbf z^k=(\alpha/\eta)\widetilde{\y}^k$ makes the
primal--tracking coupling sufficiently small relative to the network
decrement. Through the identity \eqref{tv_gt_R_lyap}, the window-contraction assumption
shows up as a one-step decrease of the combined energy $E_k$.

\section{Convergence results}
\label{sec:tv_gt_convergence_results}
We now give our main convergence results.
\subsection{Strongly convex result}
\begin{theorem}[\bf\small One-step contraction] \rm
	\label{thm:tv_gt_coupled_contraction}
	Suppose Assumptions~\ref{assumption:tv_gt_functions}--%
	\ref{assumption:tv_gt_noise} hold with $\mu>0$. If
	\begin{equation}
		\label{tv_gt_coupled_stepsize}
		0<\alpha
		\leq
		\bar\alpha
		\define
		\frac{\delta}
		{\sqrt{320}\,L\sqrt{Q\kappa}},
	\end{equation}
	define $
	V_k
	\define
	\widetilde X_k+\omega E_k$,
	$\omega
	\define
	\frac{16\alpha L}{\delta}$.
	Then
	\begin{equation}
		\label{tv_gt_V_recursion}
		V_{k+1}
		\leq
		\left(1-\frac{\mu\alpha}{2}\right)V_k
		+\frac{\alpha^2\sigma^2}{N}
		+\frac{16\alpha^3LQ\sigma^2}{\delta}.
	\end{equation}
\end{theorem}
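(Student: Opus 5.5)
The plan is to form the weighted combination $\widetilde X_{k+1}+\omega E_{k+1}$ directly from the two inequalities of Lemma~\ref{lemma:tv_gt_coupled}. No new estimates are needed: the work is in choosing $\omega$ so that the negative network term in \eqref{tv_gt_coupled_b} absorbs the disagreement injected into the centroid recursion, and then using the stepsize bound \eqref{tv_gt_coupled_stepsize} to control the cross terms. First I check that $\bar\alpha\le\frac1{4L}$, so Lemma~\ref{lemma:tv_gt_coupled} applies. This holds because $\delta\le1$, $Q\ge1$, $\kappa\ge1$ and $\sqrt{320}>4$.

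Add \eqref{tv_gt_coupled_a} to $\omega$ times \eqref{tv_gt_coupled_b} and collect coefficients.

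\textbf{Noise.} The noise terms are $\frac{\alpha^2\sigma^2}{N}+\omega\alpha^2Q\sigma^2=\frac{\alpha^2\sigma^2}{N}+\frac{16\alpha^3LQ\sigma^2}{\delta}$, exactly as claimed.

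\textbf{Cross term.} The key quantity is $\omega\cdot\frac{10\alpha^2QL^2}{\delta}=\frac{160\alpha^3QL^3}{\delta^2}$. Using $\alpha^2\le\bar\alpha^2=\frac{\delta^2}{320L^2Q\kappa}$ gives
\[
\frac{160\alpha^3QL^3}{\delta^2}\le\frac{\alpha L}{2\kappa}=\frac{\mu\alpha}{2}.
\]
\textbf{Coefficient of $\widetilde X_k$.} It is therefore at most $1-\mu\alpha+\frac{\mu\alpha}{2}=1-\frac{\mu\alpha}{2}$.

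\textbf{Coefficient of $E_k$.} It is exactly $\omega$.

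\textbf{Coefficient of $\widetilde E_k$.} Since $\omega\delta/4=4\alpha L$, it equals
\[
\frac{3\alpha L}{2}-4\alpha L+\frac{160\alpha^3QL^3}{\delta^2}\le\frac{3\alpha L}{2}-4\alpha L+\frac{\alpha L}{2}=-2\alpha L,
\]
where I used $\mu\le L$ in the last cross-term bound.

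The only remaining step is to turn part of this negative $\widetilde E_k$ term into the contraction of $\omega E_k$. By \eqref{tv_gt_bs_sandwich}, $E_k\le\widetilde E_k$, so $-2\alpha L\,\widetilde E_k\le-\frac{\mu\alpha}{2}\omega E_k$ as soon as $\frac{\mu\alpha}{2}\cdot\frac{16\alpha L}{\delta}\le2\alpha L$. This is equivalent to $\alpha\le\frac{\delta}{4\mu}$. That holds because $\alpha\le\bar\alpha\le\frac{\delta}{\sqrt{320}L}\le\frac{\delta}{4\mu}$. Combining gives $\omega E_k-2\alpha L\widetilde E_k\le(1-\frac{\mu\alpha}2)\omega E_k$, which together with the $\widetilde X_k$ bound yields \eqref{tv_gt_V_recursion}.

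The main obstacle is this bookkeeping of the $\widetilde E_k$ coefficient. The weight $\omega=16\alpha L/\delta$ must be large enough that $\omega\delta/4$ dominates the $\frac{3\alpha L}{2}$ leakage. At the same time, the cubic cross term $\omega\cdot10\alpha^2QL^2/\delta$ must stay below both $\mu\alpha/2$ and a fraction of $\alpha L$. The factor $\sqrt{320}$ in $\bar\alpha$ is exactly what makes both requirements hold with room to spare. If constants failed, I would first try adjusting how the slack $-2\alpha L$ is split between the $\widetilde E_k$ and $E_k$ terms.
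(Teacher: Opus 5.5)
Your proposal is correct and follows the paper's proof essentially line for line. You form $\widetilde X_{k+1}+\omega E_{k+1}$ from Lemma~\ref{lemma:tv_gt_coupled}, bound the cross term $\omega\cdot 10\alpha^2QL^2/\delta$ by $\mu\alpha/2$ to get the $\widetilde E_k$ coefficient $-2\alpha L$, and absorb this into $(1-\mu\alpha/2)\omega E_k$ using $E_k\le\widetilde E_k$ and $\alpha\le\delta/(4\mu)$; the paper does the same, with the last step phrased via the intermediate factor $1-\delta/8$.
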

\begin{proof}
	See Appendix~\ref{app:thm:tv_gt_coupled_contraction}.
\end{proof}

The preceding one-step recursion yields the following finite-horizon guarantee.
\begin{corollary}[\bf\small Transient rate and linear speedup]
	\label{corr:tv_gt_main} \rm
	Suppose the conditions in Theorem \ref{thm:tv_gt_coupled_contraction} hold. Then, for every horizon
	$K\geq1$, there exists a horizon-dependent stepsize such that
	\begin{align}
		\Ex\|\bar x^K-x^\star\|^2
		&\leq
		\widetilde{\mathcal O}\left(
		\frac{\sigma^2}{NK}
		+
		\frac{\tau^3\sigma^2}
		{(1-\lambda)^3K^2}
		+
		C_0
		\exp
		\left(-
		\frac{c(1-\lambda)^2K}
		{\kappa^{3/2}\tau^2}
		\right)\right),
		\label{tv_gt_transient_rate_explicit}
	\end{align}
	where  $c>0$ is an absolute constant, $	C_0\define
	\widetilde{X}_0
	+
	\frac{16\bar\alpha L}{\delta N}
	\|\widehat{\x}^0\|^2
	+
	\frac{32\bar\alpha^3L}{\eta^2\delta N}
	\|\grad\f(\x^\star)\|^2 $ is independent of $K$, and $\widetilde{\mathcal O}(\cdot)$ hides logarithmic factors and constants
	depending on $\mu$ and $L$. Moreover, linear speedup is achieved
	\begin{equation}
		\label{tv_gt_linear_speedup_rate}
		\Ex\|\bar x^K-x^\star\|^2
		\leq
		\widetilde{\mathcal O}\left(
		\frac{\sigma^2}{NK}
		\right)
	\end{equation}
	after transient time
		\begin{align}
		K_{\mathrm{transient}}
		=
		\widetilde{\mathcal O}\left(
		\frac{\kappa^{3/2}\tau^2}{(1-\lambda)^2}
		+
		\frac{N\kappa\tau^3}{(1-\lambda)^3}
		\right).
		\label{tv_gt_total_transient}
	\end{align}
\end{corollary}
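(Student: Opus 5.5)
We unroll the one-step recursion \eqref{tv_gt_V_recursion} of Theorem~\ref{thm:tv_gt_coupled_contraction} and then tune the stepsize to the horizon in the standard way for strongly convex stochastic recursions. Iterating \eqref{tv_gt_V_recursion} $K$ times and summing the geometric series $\sum_{j\ge0}(1-\mu\alpha/2)^j\le 2/(\mu\alpha)$ gives
\begin{equation*}
V_K\le\Bigl(1-\frac{\mu\alpha}{2}\Bigr)^K V_0+\frac{2\alpha\sigma^2}{\mu N}+\frac{32\alpha^2LQ\sigma^2}{\mu\delta}.
\end{equation*}
Since $E_K\ge0$, we have $\widetilde X_K=\Ex\|\bar x^K-x^\star\|^2\le V_K$, so it suffices to bound the right-hand side.

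\textbf{Bounding $V_0$ by a $K$-independent constant.} The $x$-iterates are deterministic at $k=0$. Moreover $\y^0=\zero$, so $\widetilde{\y}^0=\grad\f(\x^\star)$ and $\mathbf z^0=(\alpha/\eta)\grad\f(\x^\star)$. Using $\R_0\preceq\I$,
\begin{equation*}
E_0\le\frac1N\|\widehat{\x}^0\|^2+\frac{2\alpha^2}{\eta^2N}\|\grad\f(\x^\star)\|^2 .
\end{equation*}
Multiplying by $\omega=16\alpha L/\delta$ and using $\alpha\le\bar\alpha$ gives $V_0\le C_0$, with $C_0$ exactly as stated. This is monotone in $\alpha$, so it remains valid for any smaller horizon-dependent stepsize.

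\textbf{Horizon-dependent stepsize.} We choose
\begin{equation*}
\alpha=\min\Bigl\{\bar\alpha,\ \frac{2\ln(\max\{2,\mu^2 C_0NK/\sigma^2\})}{\mu K}\Bigr\}.
\end{equation*}
There are two cases.
\begin{itemize}
\item If $\alpha=\bar\alpha$, the first term is $C_0\exp(-\mu\bar\alpha K/2)$. Substituting $\delta\ge\Delta_\lambda/\tau$ and $\sqrt Q\le 1+\sqrt{32}\,\tau/\Delta_\lambda\lesssim\tau/\Delta_\lambda$ gives
\begin{equation*}
\mu\bar\alpha\gtrsim\frac{\mu\,\Delta_\lambda^2}{\tau^2L\sqrt\kappa}=\frac{\Delta_\lambda^2}{\kappa^{3/2}\tau^2},
\end{equation*}
which yields the exponential term in \eqref{tv_gt_transient_rate_explicit}.
\item If $\alpha$ equals the logarithmic choice, the first term is at most $\sigma^2/(\mu^2NK)$. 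The second term becomes $\widetilde{\mathcal O}(\sigma^2/(\mu^2NK))$. The third becomes $\widetilde{\mathcal O}(LQ\sigma^2/(\mu^3\delta K^2))$, and with $Q/\delta\lesssim\tau^3/\Delta_\lambda^3$ this is the middle term.
\end{itemize}
In both cases the noise terms are only improved by using $\alpha\le$ the logarithmic value. Combining the cases gives \eqref{tv_gt_transient_rate_explicit}.

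\textbf{Transient time.} The bound \eqref{tv_gt_linear_speedup_rate} holds once both extra terms are dominated by $\sigma^2/(NK)$ up to logarithms.
\begin{itemize}
\item The exponential term is negligible once $K\gtrsim\kappa^{3/2}\tau^2/\Delta_\lambda^2$ times a logarithm.
\item The middle term satisfies $\tau^3\sigma^2/(\Delta_\lambda^3K^2)\le\sigma^2/(NK)$ iff $K\ge N\tau^3/\Delta_\lambda^3$. Tracking the hidden $\mu,L$ factors ($L/\mu^3$ versus $1/\mu^2$) supplies the extra factor $\kappa$.
\end{itemize}
Together these give \eqref{tv_gt_total_transient}.

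The main obstacle is purely bookkeeping. The logarithmic stepsize choice must still satisfy $\alpha\le\bar\alpha$, which is guaranteed by taking the minimum. We must also check that the two regimes combine into a single bound without losing more than logarithmic factors. The key estimates needed are $Q\lesssim\tau^2/\Delta_\lambda^2$ and $\delta\ge\Delta_\lambda/\tau$ from \eqref{tv_gt_delta_gap} and \eqref{tv_gt_Q_definition}.
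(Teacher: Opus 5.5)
Your proposal is correct and follows essentially the same route as the paper. You unroll the recursion of Theorem~\ref{thm:tv_gt_coupled_contraction}, bound $V_0\le C_0$ via $\mathbf z^0=(\alpha/\eta)\grad\f(\x^\star)$ and $\R_0\preceq\I$, and use the capped logarithmic stepsize $\min\{\bar\alpha,\,2\ln T/(\mu K)\}$ with the same two-case analysis. You then read off the network dependence from $\delta\ge\Delta_\lambda/\tau$ and $Q\lesssim\tau^2/\Delta_\lambda^2$, as the paper does. Your use of $\max\{2,\cdot\}$ in place of the paper's $T_K=\max\{e,\mu^2NC_0K/(2\sigma^2)\}$ only changes constants.
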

\begin{proof}
	See Appendix~\ref{app:corr:tv_gt_main}.
\end{proof}
The leading term in \eqref{tv_gt_transient_rate_explicit} matches the
centralized stochastic rate obtained by averaging $N$ independent stochastic
gradients, and therefore gives linear speedup in the number of agents. The
network affects only the higher-order term and the transient period through
the window length $\tau$ and the window spectral gap $1-\lambda$. Thus,
individual graphs may be disconnected, as permitted by
Assumption~\ref{assumption:tv_gt_network}, without changing the leading
statistical term. Remark~\ref{remark:tv_gt_aligned} shows that the same
conclusion applies to the standard aligned-window model after a constant-factor
increase in the window length. When $\sigma=0$, Theorem~\ref{thm:tv_gt_coupled_contraction} with a constant
admissible stepsize gives geometric convergence to the exact solution.

\begin{corollary}[\bf\small Consensus error] \rm
	\label{corr:tv_gt_last_iterate_consensus}
	Let the conditions of Theorem~\ref{thm:tv_gt_coupled_contraction} hold,
	and let $\alpha=\alpha_K$ be the horizon-dependent stepsize
	\eqref{tv_gt_alpha_K} used in Corollary~\ref{corr:tv_gt_main}. Then, for
	every $K\geq1$,
	\begin{equation}
		\label{tv_gt_last_iterate_consensus_bound}
		\frac1N\Ex\|\widehat{\x}^K\|_{\R_K}^2
		\leq
		\widetilde{\mathcal O}\left(
		\frac{\tau^3\sigma^2}
		{(1-\lambda)^3K^2}
		+
		\frac{\tau^2E_0}
		{(1-\lambda)^2K^2}
		+
		C_0K
		\exp\left(
		-\frac{c\,(1-\lambda)^2K}{\kappa^{3/2}\tau^2}
		\right)
		\right),
	\end{equation}
	where $c>0$ is an absolute constant.
\end{corollary}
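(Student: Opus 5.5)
Since $\frac1N\Ex\|\widehat{\x}^K\|_{\R_K}^2\le E_K$, it suffices to bound $E_K$. I will work directly with the disagreement recursion \eqref{tv_gt_coupled_b} of Lemma~\ref{lemma:tv_gt_coupled}, and feed in the centroid bound obtained from Theorem~\ref{thm:tv_gt_coupled_contraction}.

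\emph{Step 1: an $E$-only recursion.} The stepsize condition \eqref{tv_gt_coupled_stepsize} gives $\alpha^2\le \delta^2/(320L^2Q\kappa)$. Hence
\[
\frac{10\alpha^2QL^2}{\delta}\le\frac{\delta}{32\kappa}\le\frac{\delta}{8}.
\]
Substituting this into \eqref{tv_gt_coupled_b} and using $\widetilde E_k\ge E_k$ from \eqref{tv_gt_bs_sandwich} yields
\[
E_{k+1}\le\Bigl(1-\frac{\delta}{8}\Bigr)E_k+\frac{10\alpha^2QL^2}{\delta}\widetilde X_k+\alpha^2Q\sigma^2 .
\]

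\emph{Step 2: unroll and bound the forcing terms.} Unrolling from $0$ to $K$ gives
\[
E_K\le\Bigl(1-\frac{\delta}{8}\Bigr)^KE_0+\frac{8}{\delta}\Bigl(\alpha^2Q\sigma^2+\frac{10\alpha^2QL^2}{\delta}\sup_{k<K}\widetilde X_k\Bigr).
\]
For the supremum I use Theorem~\ref{thm:tv_gt_coupled_contraction}. Its recursion \eqref{tv_gt_V_recursion} gives the uniform bound
\[
\widetilde X_k\le V_k\le V_0+\frac{2}{\mu\alpha}\Bigl(\frac{\alpha^2\sigma^2}{N}+\frac{16\alpha^3LQ\sigma^2}{\delta}\Bigr).
\]
This is of order $C_0+\alpha\sigma^2/(\mu N)+\alpha^2LQ\sigma^2/(\mu\delta)$. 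It would be sharper to use the decaying version, namely the bound $\widetilde X_k\lesssim(1-\mu\alpha/2)^kV_0+\text{noise}$, inside a convolution sum. Since $\mu\alpha/2\le\delta/16$, the convolution of $(1-\delta/8)^{K-1-k}$ with $(1-\mu\alpha/2)^k$ is at most about $K(1-\mu\alpha/2)^{K}$. This is the source of the factor $C_0K\exp(\cdot)$ in the statement.

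\emph{Step 3: insert the horizon stepsize.} I now substitute $\alpha_K$ from \eqref{tv_gt_alpha_K}. Presumably $\alpha_K=\min\{\bar\alpha,\ \tilde c\log(\cdot)/(\mu K)\}$, so $\alpha_K\lesssim \log K/(\mu K)$. Then:
\begin{itemize}
\item The main noise term is $8\alpha^2Q\sigma^2/\delta$. With $Q\asymp\tau^2/\Delta_\lambda^2$ and $\delta\gtrsim\Delta_\lambda/\tau$, it is $\widetilde{\mathcal O}(\tau^3\sigma^2/((1-\lambda)^3K^2))$.
\item The centroid noise contributions, multiplied by $\alpha^2Q/\delta^2$, are of higher order (e.g.\ $\alpha^3$). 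They are absorbed into this term or into the $\sigma^2/(NK)$-type term; after checking with $\alpha\le\bar\alpha$, they can be dominated by the displayed terms up to logs.
\item The term $(1-\delta/8)^KE_0$ decays at least as fast as $(1-\mu\alpha/2)^K$. When $\alpha_K=\bar\alpha$, this gives the exponential term. Otherwise I bound it crudely using $(1-x)^K\le 1/(xK)^2$, which gives $\lesssim\tau^2E_0/((1-\lambda)^2K^2)$. This explains the second displayed term.
\item The $C_0$-dependent transient, times $K$, is handled as in Corollary~\ref{corr:tv_gt_main}: $(1-\mu\alpha_K/2)^K\le \exp(-cK(1-\lambda)^2/(\kappa^{3/2}\tau^2))+\mathrm{poly}(1/K)$, where the polynomial part is absorbed by the choice of logarithm.
\end{itemize}

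\emph{Main obstacle.} The delicate step is the convolution in Step 2. It requires combining the centroid transient with the faster $E$-contraction without losing more than a factor $K$, while tracking that the $\widetilde X$-noise pieces are genuinely lower order than $\tau^3\sigma^2/((1-\lambda)^3K^2)$. I would first try simply bounding $\sup_k\widetilde X_k$ via $V_k$ as in Step 2. If that leaves an extra $\sigma^2/(NK)$-type contribution, I would check that, after multiplication by $\alpha_K^2Q/\delta^2$, it becomes $\widetilde{\mathcal O}(K^{-3})$ and is therefore dominated.
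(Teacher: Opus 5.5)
Your proposal follows the paper's proof. It uses the same $E$-only recursion with factor $1-\delta/8$, and the same convolution of $(1-\delta/8)^{K-1-k}$ against the decaying centroid bound $(1-\mu\alpha/2)^kC_0$, bounded by $K(1-\mu\alpha/2)^{K-1}$; when $\alpha_K=2\ln T_K/(\mu K)$, the factor $C_0K/T_K$ cancels against the retained prefactor $10\alpha_K^2QL^2/\delta$. It also uses the same bound $(1-\delta/8)^KE_0\lesssim E_0/(\delta K)^2$, and replaces one factor $\alpha_K$ by $\bar\alpha$ in the $\alpha^3$ and $\alpha^4$ noise pieces; being $\widetilde{\mathcal O}(K^{-3})$ alone would not make them dominated, given their extra network factors.

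Note that the convolution is not optional. Your $\sup_k\widetilde X_k$ fallback leaves the term $\frac{80\alpha_K^2QL^2}{\delta^2}C_0$, which equals $C_0/(4\kappa)$ when $\alpha_K=\bar\alpha$ and is of order $C_0\tau^4/((1-\lambda)^4K^2)$ up to logarithms otherwise; neither is controlled by the stated bound.
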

\begin{proof}
	See Appendix~\ref{app:tv_gt_last_iterate_consensus}.
\end{proof}	
\subsection{Convex result}

For the convex case ($\mu=0$), define the expected function gap
\begin{equation}
	\label{tv_gt_cvx_h_definition}
	\widetilde F_k
	\define
	\Ex[f(\bar x^k)-f(x^\star)].
\end{equation}
The following lemma gives the convex counterpart of
Lemma~\ref{lemma:tv_gt_coupled}.

\begin{lemma}[\bf\small Convex centroid--disagreement recursion] \rm
	\label{lemma:tv_gt_convex_coupled}
	Suppose Assumptions~\ref{assumption:tv_gt_functions}--%
	\ref{assumption:tv_gt_noise} hold with $\mu=0$. If
	$0<\alpha\leq1/(4L)$, then, for every $k\geq0$,
	\begin{subequations}
		\label{tv_gt_cvx_coupled_recursion}
		\begin{align}
			\widetilde X_{k+1}
			&\leq
			\widetilde X_k
			-\alpha\widetilde F_k
			+\frac{3\alpha L}{2}\widetilde E_k
			+\frac{\alpha^2\sigma^2}{N},
			\label{tv_gt_cvx_centroid_recursion}
			\\
			E_{k+1}
			&\leq
			E_k
			-\frac{\delta}{4}\widetilde E_k
			+\frac{20\alpha^2QL}{\delta}\widetilde F_k
			+\frac{10\alpha^2QL^2}{\delta}\widetilde E_k
			+\alpha^2Q\sigma^2.
			\label{tv_gt_cvx_disagreement_recursion}
		\end{align}
	\end{subequations}
\end{lemma}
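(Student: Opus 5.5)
We will reuse the proof of Lemma~\ref{lemma:tv_gt_coupled} almost line by line, changing only the two places where strong convexity entered: the centroid descent step and the bound on $\|\u^k\|^2$. In both places, $\|\bar x^k-x^\star\|^2$ will be replaced by the function gap $f(\bar x^k)-f(x^\star)$.

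\textbf{Centroid recursion.} We start from \eqref{tv_gt_centroid} and expand $\|\bar x^{k+1}-x^\star\|^2$. The noise $\bar v^k$ has zero conditional mean. By independence across agents, $\Ex[\|\bar v^k\|^2\mid\cF^k]\le\sigma^2/N$, which produces the term $\alpha^2\sigma^2/N$. For the inner-product term $-2\alpha\langle\overline{\grad\f}(\x^k),\bar x^k-x^\star\rangle$ we use the standard inexact-gradient argument. For each $i$, convexity and $L$-smoothness of $f_i$ give
\[
f_i(x^\star)\ge f_i(x_i^k)+\langle\grad f_i(x_i^k),x^\star-x_i^k\rangle
\]
and
\[
f_i(\bar x^k)\le f_i(x_i^k)+\langle\grad f_i(x_i^k),\bar x^k-x_i^k\rangle+\tfrac L2\|\bar x^k-x_i^k\|^2.
\]
Adding these and averaging over $i$ yields
\[
\langle\overline{\grad\f}(\x^k),\bar x^k-x^\star\rangle\ge f(\bar x^k)-f(x^\star)-\tfrac{L}{2N}\|\widehat{\x}^k\|^2.
\]
For the quadratic term, we split
\[
\|\overline{\grad\f}(\x^k)\|^2\le 2\|\grad f(\bar x^k)\|^2+\tfrac{2L^2}{N}\|\widehat{\x}^k\|^2,
\]
and then apply $\|\grad f(\bar x^k)\|^2\le 2L(f(\bar x^k)-f^\star)$. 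Since $\alpha\le1/(4L)$, the quadratic term contributes at most $4\alpha^2L\,\widetilde F_k\le\alpha\widetilde F_k$ and $2\alpha^2L^2\cdot\frac1N\|\widehat\x^k\|^2\le \frac{\alpha L}{2}\cdot\frac1N\|\widehat\x^k\|^2$. The net result is $-2\alpha\widetilde F_k+\alpha\widetilde F_k=-\alpha\widetilde F_k$, together with a disagreement term of at most $(\alpha L+\alpha L/2)\frac1N\Ex\|\widehat\x^k\|^2\le\frac{3\alpha L}{2}\widetilde E_k$ by \eqref{tv_gt_bs_sandwich}. This gives \eqref{tv_gt_cvx_centroid_recursion}. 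It matches \eqref{tv_gt_coupled_a} with the $\mu$ term replaced by $-\alpha\widetilde F_k$.

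\textbf{Disagreement recursion.} We follow the proof of \eqref{tv_gt_coupled_b} exactly. Expand $\|\widehat\x^{k+1}\|^2_{\R_{k+1}}+2\|\mathbf z^{k+1}\|^2_{\R_{k+1}}$ using \eqref{tv_gt_scaled_disagreement} and the identity \eqref{tv_gt_R_lyap}. Young's inequality, together with $\eta\le\delta/2$ and $\R_k\preceq\I$, absorbs the $\eta\mathbf z^k$ coupling into $-\frac\delta4\widetilde{\mathcal E}_k$. The noise contributes $\alpha^2Q\sigma^2$ after taking conditional expectations. All remaining forcing terms are bounded by a multiple $\frac{\alpha^2Q}{\delta}$ of $\frac1N\Ex\|\u^k\|^2$. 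That earlier proof must then have used
\[
\tfrac1N\|\u^k\|^2\le \tfrac{2L^2}{N}\|\widehat\x^k\|^2+2L^2\|\bar x^k-x^\star\|^2,
\]
leading to the coefficient $10\alpha^2QL^2/\delta$, i.e.\ forcing $\frac{5\alpha^2Q}{\delta}\frac1N\Ex\|\u^k\|^2$. The only change is in the bound on $\|\u^k\|^2$. Write $\grad f_i(x_i^k)-\grad f_i(x^\star)$ as $[\grad f_i(x_i^k)-\grad f_i(\bar x^k)]+[\grad f_i(\bar x^k)-\grad f_i(x^\star)]$. For the second bracket, use co-coercivity of each convex $L$-smooth $f_i$:
\[
\tfrac1N\sum_i\|\grad f_i(\bar x^k)-\grad f_i(x^\star)\|^2\le 2L\bigl(f(\bar x^k)-f(x^\star)-\langle\grad f(x^\star),\bar x^k-x^\star\rangle\bigr)=2L(f(\bar x^k)-f^\star).
\]
Here we used $\grad f(x^\star)=0$. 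Hence
\[
\tfrac1N\|\u^k\|^2\le\tfrac{2L^2}{N}\|\widehat\x^k\|^2+4L(f(\bar x^k)-f^\star).
\]
Substituting this with the same factor $5\alpha^2Q/\delta$ yields $\frac{10\alpha^2QL^2}{\delta}\widetilde E_k+\frac{20\alpha^2QL}{\delta}\widetilde F_k$, which is \eqref{tv_gt_cvx_disagreement_recursion}.

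\textbf{Main obstacle.} The main obstacle is confirming that the proof of Lemma~\ref{lemma:tv_gt_coupled} uses strong convexity only through the step $\|\u^k\|^2\lesssim\|\widehat\x^k\|^2+N\|\bar x^k-x^\star\|^2$, with the constant $5\alpha^2Q/\delta$ in front. That earlier argument should indeed be purely Lipschitz-based, so the substitution above is mechanical once the constant is tracked. The co-coercivity bound requires per-agent convexity, which Assumption~\ref{assumption:tv_gt_functions} provides with $\mu=0$.
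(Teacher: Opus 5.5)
Your proposal is correct and follows the paper's proof essentially step for step. The centroid bound is Lemma~\ref{lemma:tv_gt_average} with $\mu=0$ combined with $\frac1N\Ex\|\widehat{\x}^k\|^2\le\widetilde E_k$. The disagreement bound inserts the co-coercivity estimate $\frac1N\|\u^k\|^2\le\frac{2L^2}{N}\|\widehat{\x}^k\|^2+4L\bigl(f(\bar x^k)-f(x^\star)\bigr)$ into the purely network/noise inequality \eqref{tv_gt_network_pre_coupled}, whose factor $5\alpha^2Q/\delta$ you inferred correctly (that step uses only $L$-smoothness, not strong convexity, as you suspected).
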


\begin{proof}
	See Appendix~\ref{app:tv_gt_convex_coupled}.
\end{proof}

\begin{theorem}[\bf\small Convex finite-horizon bound] \rm
	\label{thm:tv_gt_convex}
	Suppose Assumptions~\ref{assumption:tv_gt_functions}--%
	\ref{assumption:tv_gt_noise} hold with $\mu=0$. If
	\begin{equation}
		\label{tv_gt_cvx_alpha_bnd}
		0<\alpha
		\leq
		\alpha_{\rm cvx}
		\define
		\frac{\delta}{\sqrt{480}\,L\sqrt Q},
	\end{equation}
	then, for every $K\geq1$,
	\begin{align}
		&\frac1K\sum_{k=0}^{K-1}
		\left(
		\Ex[f(\bar x^k)-f(x^\star)]
		+
		\frac{L}{N}\Ex\|\widehat{\x}^k\|^2
		\right)
		\nonumber\\
		&\qquad\leq
		\frac{8\widetilde X_0}{3\alpha K}
		+
		\frac{40LE_0}{\delta K}
		+
		\frac{8\alpha\sigma^2}{3N}
		+
		\frac{40\alpha^2QL\sigma^2}{\delta}.
		\label{tv_gt_cvx_finite_horizon}
	\end{align}
\end{theorem}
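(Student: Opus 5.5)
The plan is to run a single Lyapunov argument on top of Lemma~\ref{lemma:tv_gt_convex_coupled}, mirroring the strongly convex case. Define
\[
V_k\define\widetilde X_k+\omega E_k,\qquad \omega\define\frac{15\alpha L}{\delta}.
\]
This weight is chosen so that $\omega\delta/4=15\alpha L/4$ overwhelms the $3\alpha L/2$ coupling term in \eqref{tv_gt_cvx_centroid_recursion}. It is also the choice that reproduces the stated constants: once the final recursion is multiplied by $8/(3\alpha)$, the coefficient $\omega$ turns into the $40L/\delta$ multiplying $E_0$.

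First, I add \eqref{tv_gt_cvx_centroid_recursion} to $\omega$ times \eqref{tv_gt_cvx_disagreement_recursion}. This gives
\[
V_{k+1}\le V_k-\Bigl(\alpha-\tfrac{300\alpha^3QL^2}{\delta^2}\Bigr)\widetilde F_k
+\Bigl(\tfrac{3\alpha L}{2}-\tfrac{15\alpha L}{4}+\tfrac{150\alpha^3QL^3}{\delta^2}\Bigr)\widetilde E_k
+\tfrac{\alpha^2\sigma^2}{N}+\tfrac{15\alpha^3QL\sigma^2}{\delta}.
\]
Next I apply the stepsize bound \eqref{tv_gt_cvx_alpha_bnd}, i.e.\ $\alpha^2QL^2/\delta^2\le 1/480$. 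With it the $\widetilde F_k$ coefficient is at most $-\alpha(1-300/480)=-3\alpha/8$. The $\widetilde E_k$ coefficient is at most $\alpha L(3/2-15/4+150/480)<-3\alpha L/8$. The condition $\alpha\le 1/(4L)$ needed by Lemma~\ref{lemma:tv_gt_convex_coupled} also holds, because $\delta\le1$ and $Q\ge1$. By \eqref{tv_gt_bs_sandwich}, $\widetilde E_k\ge\frac1N\Ex\|\widehat{\x}^k\|^2$, and $\widetilde F_k\ge0$. Together these yield
\[
\tfrac{3\alpha}{8}\Bigl(\widetilde F_k+\tfrac{L}{N}\Ex\|\widehat{\x}^k\|^2\Bigr)\le V_k-V_{k+1}+\tfrac{\alpha^2\sigma^2}{N}+\tfrac{15\alpha^3QL\sigma^2}{\delta}.
\]

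Finally, I sum this inequality over $k=0,\dots,K-1$, telescope, and drop $V_K\ge0$. Multiplying by $8/(3\alpha K)$ then gives $8V_0/(3\alpha K)=8\widetilde X_0/(3\alpha K)+40LE_0/(\delta K)$. The noise terms become $8\alpha\sigma^2/(3N)$ and $40\alpha^2QL\sigma^2/\delta$, which is exactly \eqref{tv_gt_cvx_finite_horizon}.

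The main obstacle is the choice of $\omega$. It has to be large enough that $\omega\delta/4$ absorbs $3\alpha L/2$ with slack, but small enough that the terms $\omega\cdot 20\alpha^2QL\widetilde F_k/\delta$ and $\omega\cdot 10\alpha^2QL^2\widetilde E_k/\delta$ that it feeds back stay dominated. This is what forces the $\alpha=\mathcal O(\delta/(L\sqrt Q))$ condition. The rest of the argument is bookkeeping against the constant $480$.
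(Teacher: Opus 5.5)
Your argument is correct and reproduces exactly the constants in \eqref{tv_gt_cvx_finite_horizon}. With $\omega=15\alpha L/\delta$ and $\alpha^2QL^2/\delta^2\le 1/480$:
the $\widetilde F_k$ coefficient is at most $-3\alpha/8$;
the $\widetilde E_k$ coefficient is at most $-\tfrac{31}{16}\alpha L$;
and after multiplying by $\tfrac{8}{3\alpha K}$ you get $8\omega/(3\alpha)=40L/\delta$ and $\tfrac{8}{3\alpha}\cdot 15\alpha^3QL\sigma^2/\delta=40\alpha^2QL\sigma^2/\delta$.
You also correctly use $\widetilde F_k\ge0$, $\widetilde E_k\ge \tfrac1N\Ex\|\widehat{\x}^k\|^2$, and $V_K\ge0$.

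Your route differs from the paper's in how the argument is organized. The paper never forms a combined potential in the convex case. Instead it proceeds in three stages:
first, it sums the centroid recursion to bound $\frac1K\sum_k\widetilde F_k$ in terms of $\frac1K\sum_k\widetilde E_k$;
second, it simplifies the disagreement recursion to a $-\frac{\delta}{8}\widetilde E_k$ decrement (via $10\alpha^2QL^2/\delta\le\delta/48$), sums it, substitutes, and absorbs the $\widetilde F$ feedback (via $240\alpha^2QL^2/\delta^2\le\tfrac12$);
third, it adds $L$ times the averaged disagreement bound (via $160\alpha^2QL^2/\delta^2\le\tfrac13$).

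You take a nonnegative combination of the two recursions up front through $V_k=\widetilde X_k+\omega E_k$. A single per-iteration descent inequality and one telescoping sum then replace the three absorption steps. This is shorter, and it is the direct convex analogue of the Lyapunov function in Theorem~\ref{thm:tv_gt_coupled_contraction}, in keeping with the paper's one-step theme. The paper's sequential route, in exchange, yields separate intermediate bounds along the way. For example, it gives a bound on the function-gap average alone with the smaller constants $2$ and $24$ in \eqref{tv_gt_cvx_H_bound}, whereas dropping the disagreement term from your descent inequality leaves the constants $8/3$ and $40$.
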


\begin{proof}
	See Appendix~\ref{app:tv_gt_convex_thm}.
\end{proof}

\begin{corollary}[\bf\small Convex rate and linear speedup] \rm
	\label{corr:tv_gt_convex_rate}
	Suppose the conditions of Theorem~\ref{thm:tv_gt_convex} hold. Then there exists a
	horizon-dependent stepsize such that
	\begin{align}
		&\frac1K\sum_{k=0}^{K-1}
		\left(
		\Ex[f(\bar x^k)-f(x^\star)]
		+
		\frac{L}{N}\Ex\|\widehat{\x}^k\|^2
		\right)
		\nonumber\\
		&\quad=
		\mathcal O\left(
		\frac{\sigma}{\sqrt{NK}}
		+
		\frac{\tau}{1-\lambda}
		\left(\frac{\sigma}{K}\right)^{2/3}
		+
		\frac{\tau^2\widetilde X_0}
		{(1-\lambda)^2K}
		+
		\frac{\tau\bar E_0}
		{(1-\lambda)K}
		\right),
		\label{tv_gt_cvx_network_rate}
	\end{align}
	where $
	\bar E_0
	\define
	\frac1N
	\left(
	\|\widehat{\x}^0\|_{\R_0}^2
	+
	\frac{2\alpha_{\rm cvx}^2}{\eta^2}
	\|\grad\f(\x^\star)\|_{\R_0}^2
	\right)$. 
\end{corollary}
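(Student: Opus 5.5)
The proof is a stepsize-tuning argument applied to the bound \eqref{tv_gt_cvx_finite_horizon} of Theorem~\ref{thm:tv_gt_convex}. That bound has the form
\[
\frac{a}{\alpha K}+\frac{b}{K}+c_1\alpha+c_2\alpha^2,
\]
valid for every $0<\alpha\le\alpha_{\rm cvx}$. The coefficients are
\[
a=\tfrac{8}{3}\widetilde X_0,\qquad b=\tfrac{40L}{\delta}E_0,\qquad c_1=\tfrac{8\sigma^2}{3N},\qquad c_2=\tfrac{40QL\sigma^2}{\delta}.
\]

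I would choose
\[
\alpha=\min\Bigl\{\alpha_{\rm cvx},\ \bigl(\tfrac{a}{c_1K}\bigr)^{1/2},\ \bigl(\tfrac{a}{c_2K}\bigr)^{1/3}\Bigr\}.
\]
The standard case analysis then applies. If the minimum is $\alpha_{\rm cvx}$, the term $a/(\alpha K)$ equals $a/(\alpha_{\rm cvx}K)$. In addition $c_1\alpha\le a/(\alpha K)$ and $c_2\alpha^2\le a/(\alpha K)$ hold, since $\alpha$ is below the two balancing values. If the minimum is one of the other two, $a/(\alpha K)$ is bounded by $\sqrt{ac_1/K}$ or by $a^{2/3}c_2^{1/3}K^{-2/3}$. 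The remaining terms are dominated by the same quantities by construction.

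The result is
\[
\mathcal O\Bigl(\sqrt{\tfrac{\widetilde X_0\sigma^2}{NK}}+\tfrac{\widetilde X_0^{2/3}(QL\sigma^2/\delta)^{1/3}}{K^{2/3}}+\tfrac{\widetilde X_0}{\alpha_{\rm cvx}K}+\tfrac{LE_0}{\delta K}\Bigr).
\]

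Next I translate the network constants using Lemma~\ref{lemma:tv_gt_R} and \eqref{tv_gt_Q_definition}. These give $\delta\ge\Delta_\lambda/\tau$ and $Q\le 33\tau^2/\Delta_\lambda^2$, so $Q/\delta=\mathcal O(\tau^3/\Delta_\lambda^3)$. Hence $(Q/\delta)^{1/3}=\mathcal O(\tau/\Delta_\lambda)$, which gives the $\frac{\tau}{1-\lambda}(\sigma/K)^{2/3}$ term; the constants $\widetilde X_0$ and $L$ are absorbed into $\mathcal O$, as is the problem-constant factor in front of $\sigma/\sqrt{NK}$. For the third term, $1/\alpha_{\rm cvx}=\sqrt{480}L\sqrt Q/\delta=\mathcal O(L\tau^2/\Delta_\lambda^2)$, which gives the $\tau^2\widetilde X_0/((1-\lambda)^2K)$ term. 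For the fourth term, $1/\delta\le\tau/\Delta_\lambda$, which gives the $\tau E_0/((1-\lambda)K)$ term.

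The remaining care point is expressing $E_0$ through $\bar E_0$, and this is where the initialization enters. By definition, $E_0=\frac1N(\|\widehat{\x}^0\|_{\R_0}^2+2\|\mathbf z^0\|_{\R_0}^2)$. Since $\y^0=\zero$, \eqref{tv_gt_ytilde_definition} gives $\widetilde{\y}^0=\grad\f(\x^\star)$, so $\mathbf z^0=\frac{\alpha}{\eta}\grad\f(\x^\star)$. Because $\alpha\le\alpha_{\rm cvx}$, it follows that $2\|\mathbf z^0\|_{\R_0}^2\le\frac{2\alpha_{\rm cvx}^2}{\eta^2}\|\grad\f(\x^\star)\|_{\R_0}^2$. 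Therefore $E_0\le\bar E_0$ uniformly over the chosen horizon-dependent stepsizes.

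That makes the bound \eqref{tv_gt_cvx_network_rate} hold with constants independent of $K$, which completes the argument. The only subtle step is this uniformity of $E_0$ in $\alpha$, which is exactly why $\bar E_0$ is defined with $\alpha_{\rm cvx}$.
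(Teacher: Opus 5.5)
Your proposal is correct and matches the paper's proof. It uses the same stepsize $\alpha_K=\min\{\sqrt{A/B},(A/C)^{1/3},\alpha_{\rm cvx}\}$ (your $a/K$, $c_1$, $c_2$ are the paper's $A$, $B$, $C$), the same bound $E_0\le\bar E_0$ via $\mathbf z^0=(\alpha/\eta)\grad\f(\x^\star)$, and the same translation of $Q/\delta$, $1/\alpha_{\rm cvx}$, $1/\delta$ into $\tau$ and $1-\lambda$; the only cosmetic difference is that you bound $a/(\alpha K)$ by case analysis on which entry attains the minimum, whereas the paper uses $1/\min\{x,y,z\}\le 1/x+1/y+1/z$.
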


\begin{proof}
	See Appendix~\ref{app:tv_gt_convex_rate}.
\end{proof}
The leading term in \eqref{tv_gt_cvx_network_rate} scales as
$\mathcal O(\sigma/\sqrt{NK})$ and is independent of the network parameters,
matching the centralized mini-batch stochastic-gradient rate. The remaining
terms decay faster with $K$ and capture the effect of the time-varying
network. In particular, comparing each network-dependent term with the
leading statistical term shows that they become lower order after $
	K_{\mathrm{transient}}
	=
	\mathcal O\left(
	\frac{N^3\tau^6}
	{(1-\lambda)^6}
	\right)$.
Hence, beyond this network-dependent transient, the convergence rate is
dominated by the centralized term
$\mathcal O(\sigma/\sqrt{NK})$, and the method achieves
linear speedup in the number of agents despite the time-varying network. In the noiseless case, setting $\sigma=0$ in \eqref{tv_gt_cvx_finite_horizon} and using a constant admissible stepsize
gives $\mathcal O(\tau^2/((1-\lambda)^2K))$.

\begin{remark}[\bf\small Consensual initialization] \rm
	\label{remark:tv_gt_convex_consensual_initialization}
	If $x_i^0=x^0$ for every $i$, then
	$\widehat{\x}^0=\zero$ and
	$\widetilde X_0=\|x^0-x^\star\|^2$. Moreover, $
	\bar E_0
	=
	\frac{2\alpha_{\rm cvx}^2}{\eta^2N}
	\|\grad\f(\x^\star)\|_{\R_0}^2$. 	Thus, the local-gradient heterogeneity at the solution enters only
	through the $K^{-1}$ initialization term in
	\eqref{tv_gt_cvx_network_rate}.
\end{remark}
\section{Conclusion and future directions}
\label{sec:tv_gt_conclusion}

This paper analyzed stochastic gradient tracking over deterministic
time-varying networks under a window-mixing condition that permits
individual communication graphs to be disconnected. The time-varying
quadratic Lyapunov construction converts contraction over communication
windows into a one-step decrease, leading to finite-time guarantees for both
strongly convex and convex objectives. In both cases, the dominant
statistical term matches the corresponding centralized mini-batch rate after
a network-dependent transient.

Future work includes extending the one-step Lyapunov framework to random communication graphs, smooth nonconvex objectives, and time-varying directed push--pull methods. In particular, whether time-varying push--pull admits linear speedup is open; even for static networks such guarantees appeared only
recently~\cite{liang2025linear}.

\newpage
\appendices
\section{Auxiliary results}
\label{appendix:auxiliary_results}

We first collect the basic mixing and stochastic-noise consequences of the
main assumptions that are used in the proofs. We also recall a standard bound
for the centroid error.
\begin{proposition}[\bf\small Properties of the mixing matrices]
	\label{prop:tv_gt_mixing_properties} \rm
	Under Assumption~\ref{assumption:tv_gt_network}, for every $k\geq0$ and
	every $t\geq s\geq0$,
	\begin{subequations}
		\begin{align}
			\|\widetilde{\W}_{k+\tau-1:k}\|_2
			&\leq
			\lambda,
			\qquad k\geq0
			\label{tv_gt_lifted_window_contraction}
			\\
			\|\widetilde{\W}_{t:s}\|_2
			&\leq
			1
			\label{tv_gt_product_identity}
			\\
			\|\I-\W_k\|_2
			&\leq
			2,
			\label{tv_gt_matrix_norms}
		\end{align}
	\end{subequations}
	where $	\widetilde{\W}_{t:s}
	\define
	\widetilde W_{t:s}\otimes I_d
	=
	\W_t\W_{t-1}\cdots\W_s-\mathbf\Pi$.
\end{proposition}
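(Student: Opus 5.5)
The three claims follow directly from the Kronecker structure $\widetilde{\W}_k=\widetilde W_k\otimes I_d$, from double stochasticity, and from Assumption~\ref{assumption:tv_gt_network}. We prove them in order.

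\textbf{Product identity.} First we check that $\widetilde{\W}_{t:s}=\W_t\cdots\W_s-\mathbf\Pi$. Double stochasticity gives $W_k\Pi=\Pi W_k=\Pi$, and $\Pi^2=\Pi$. Hence
\[
(W_t-\Pi)(W_{t-1}\cdots W_s-\Pi)=W_t\cdots W_s-\Pi-\Pi+\Pi=W_t\cdots W_s-\Pi.
\]
Induction on $t-s$ then yields $\widetilde W_{t:s}=W_t\cdots W_s-\Pi$. The mixed-product property of the Kronecker product lifts this to $\widetilde{\W}_{t:s}=\widetilde W_{t:s}\otimes I_d=\W_t\cdots\W_s-\mathbf\Pi$.

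\textbf{Window contraction \eqref{tv_gt_lifted_window_contraction}.} For any matrix $A$ we have $\|A\otimes I_d\|_2=\|A\|_2$, since the singular values of $A\otimes I_d$ are those of $A$, each repeated $d$ times. Applying this to \eqref{tv_gt_window_contraction} gives the claim.

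\textbf{Nonexpansiveness \eqref{tv_gt_product_identity}.} Set $M=W_t\cdots W_s$. It is nonnegative and doubly stochastic, so $\|M\|_1=\|M\|_\infty=1$, and hence $\|M\|_2\le\sqrt{\|M\|_1\|M\|_\infty}=1$. Now $M-\Pi=M(I-\Pi)$, because $M\Pi=\Pi$. The matrix $I-\Pi$ is an orthogonal projector with norm at most $1$. Therefore $\|M-\Pi\|_2\le\|M\|_2\|I-\Pi\|_2\le1$. Lifting by Kronecker gives the stated bound. This is the only step needing care; its key point is writing $M-\Pi$ as $M(I-\Pi)$ rather than bounding $\|M\|_2+\|\Pi\|_2$, which would only give $2$.

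\textbf{Bound \eqref{tv_gt_matrix_norms}.} By the triangle inequality and the bound $\|W_k\|_2\le1$ just used, $\|\I-\W_k\|_2\le1+\|\W_k\|_2\le2$.
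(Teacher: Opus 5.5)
Your proposal is correct and follows essentially the same route as the paper: use $\Pi W_k=W_k\Pi=\Pi$ to collapse the product to $W_t\cdots W_s-\Pi$, bound $\|M\|_2\le\sqrt{\|M\|_1\|M\|_\infty}=1$, and factor out the projector. You write $M(I-\Pi)$ where the paper writes $(I-\Pi)M$. The remaining steps also match: the triangle inequality for $\|\I-\W_k\|_2\le 2$ and $\|A\otimes I_d\|_2=\|A\|_2$ to lift to the stacked matrices.
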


\begin{proof}
	Since $W_k$ is nonnegative and doubly stochastic,
	$\|W_k\|_1=\|W_k\|_\infty=1$. Hence
	\begin{equation}
		\|W_k\|_2
		\leq
		\sqrt{\|W_k\|_1\|W_k\|_\infty}
		=
		1.
	\end{equation}
	Moreover, double stochasticity implies
	$\Pi W_k=W_k\Pi=\Pi$. Therefore,
	\begin{align}
		\widetilde W_{t:s}
		&=
		(W_t-\Pi)(W_{t-1}-\Pi)\cdots(W_s-\Pi)
		\nonumber\\
		&=
		W_tW_{t-1}\cdots W_s-\Pi.
	\end{align}
	Since the product $W_t\cdots W_s$ is also nonnegative and doubly
	stochastic,
	\begin{equation}
		\|\widetilde W_{t:s}\|_2
		=
		\|(I_N-\Pi)W_t\cdots W_s\|_2
		\leq
		1.
	\end{equation}
	The bound
	$\|I_N-W_k\|_2\leq2$
	then follows from the triangle inequality.	For the lifted matrices,
	$\widetilde{\W}_k=\widetilde W_k\otimes I_d$, and hence $\widetilde{\W}_{t:s}
		=
		\widetilde W_{t:s}\otimes I_d$.
	Using $\|A\otimes I_d\|_2=\|A\|_2$, all the corresponding lifted
	bounds follow immediately from the scalar-matrix bounds and
	Assumption~\ref{assumption:tv_gt_network}.
\end{proof}

\begin{proposition}[\bf\small Gradient-noise bounds]
	\label{prop:tv_gt_noise_bounds} \rm
	Under Assumption~\ref{assumption:tv_gt_noise}, for every $k\geq0$,
	\begin{equation}
		\label{tv_gt_noise_bounds}
		\Ex\bigl[\|\v^k\|^2\mid\cF^k\bigr]
		\leq
		N\sigma^2,
		\qquad
		\Ex\bigl[\|\bar v^k\|^2\mid\cF^k\bigr]
		\leq
		\frac{\sigma^2}{N}.
	\end{equation}
\end{proposition}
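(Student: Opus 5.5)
The first bound follows by summing the per-agent conditional second moments. Write $\v^k=\col\{v_i^k\}_{i=1}^N$ with $v_i^k\define\grad F_i(x_i^k;\xi_i^k)-\grad f_i(x_i^k)$. Then $\|\v^k\|^2=\sum_{i=1}^N\|v_i^k\|^2$. We take conditional expectation given $\cF^k$ and apply the second condition in Assumption~\ref{assumption:tv_gt_noise} to each term. Here $x_i^k$ is $\cF^k$-measurable, since it is built from quantities generated up to iteration $k-1$. This gives $\Ex[\|\v^k\|^2\mid\cF^k]\leq N\sigma^2$.

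For the averaged noise we have $\bar v^k=\frac1N\sum_{i=1}^N v_i^k$, and we expand
\begin{equation*}
	\|\bar v^k\|^2=\frac1{N^2}\Bigl(\sum_{i=1}^N\|v_i^k\|^2+\sum_{i\neq j}\langle v_i^k,v_j^k\rangle\Bigr).
\end{equation*}
The key step is that the cross terms vanish in conditional expectation. Conditioned on $\cF^k$, the samples $\xi_i^k$ and $\xi_j^k$ are independent for $i\neq j$, and $x_i^k,x_j^k$ are fixed. Hence $v_i^k$ and $v_j^k$ are conditionally independent, which gives
\begin{equation*}
	\Ex[\langle v_i^k,v_j^k\rangle\mid\cF^k]=\langle\Ex[v_i^k\mid\cF^k],\Ex[v_j^k\mid\cF^k]\rangle=0
\end{equation*}
by the unbiasedness condition. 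The diagonal terms are then bounded by $N\sigma^2$ as in the first part, so $\Ex[\|\bar v^k\|^2\mid\cF^k]\leq N\sigma^2/N^2=\sigma^2/N$.

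The only point needing care is the measurability claim that $x_i^k$ is determined by $\cF^k$, so that the conditional independence of the samples transfers to the $v_i^k$. This follows from the recursion \eqref{tv_gt_algorithm}: $\x^k$ depends only on $\x^{k-1}$, $\g^{k-1}$, and the samples $\bxi^{k-1}$, all contained in $\cF^k$. The mixing matrices are deterministic by Assumption~\ref{assumption:tv_gt_network}, and the initial iterates are deterministic by Assumption~\ref{assumption:tv_gt_noise}. Everything else is routine.
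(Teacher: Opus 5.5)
Your proof is correct and follows the same route as the paper. You sum the per-agent conditional variances for the first bound, then expand $\|\bar v^k\|^2$ and make the cross terms vanish using conditional independence and zero conditional means. You also check explicitly that $x_i^k$ is $\cF^k$-measurable, so that the conditional independence of the samples carries over to the $v_i^k$; the paper leaves this detail implicit.
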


\begin{proof}
	Write
	\begin{equation}
		v_i^k
		\define
		\grad F_i(x_i^k;\xi_i^k)-\grad f_i(x_i^k),
	\end{equation}
	so that
	$\v^k=\col\{v_1^k,\ldots,v_N^k\}$.
	By conditional unbiasedness and the bounded-variance assumption, $
	\Ex[v_i^k\mid\cF^k]=0$ and $\Ex[\|v_i^k\|^2\mid\cF^k]
	\leq
	\sigma^2$.
	Therefore,
	\begin{align}
		\Ex[\|\v^k\|^2\mid\cF^k]
		&=
		\sum_{i=1}^N
		\Ex[\|v_i^k\|^2\mid\cF^k]
		\leq
		N\sigma^2.
	\end{align}
	Moreover, for $\bar v^k
	=
	\frac1N\sum_{i=1}^N v_i^k$, conditional independence and zero conditional means imply that the cross
	terms vanish, and hence
	\begin{align}
		\Ex[\|\bar v^k\|^2\mid\cF^k]
		&=
		\frac1{N^2}
		\sum_{i=1}^N
		\Ex[\|v_i^k\|^2\mid\cF^k]
		\leq
		\frac{\sigma^2}{N}.
	\end{align}
\end{proof}
The following lemma provides the basic bound for the centroid error.
\begin{lemma}[\bf\small Centroid error inequality]
	\label{lemma:tv_gt_average} \rm
	Under Assumptions~\ref{assumption:tv_gt_functions}--\ref{assumption:tv_gt_noise},
	if $0<\alpha\leq\frac1{4L}$,
	then, for every $k\geq0$,
	\begin{align}
		\Ex\|\bar x^{k+1}-x^\star\|^2
		&\leq
		(1-\mu\alpha)
		\Ex\|\bar x^k-x^\star\|^2
		-\alpha
		\Ex[
		f(\bar x^k)-f(x^\star)]
		\nonumber\\
		&\quad
		+\frac{3\alpha L}{2N}
		\Ex\|\widehat{\x}^k\|^2
		+\frac{\alpha^2\sigma^2}{N}.
		\label{centroid_ineq}
	\end{align}
\end{lemma}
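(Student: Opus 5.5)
We expand the centroid recursion \eqref{tv_gt_centroid} and split off the noise. Write $e^k\define\bar x^k-x^\star$. Then
\[
e^{k+1}=e^k-\alpha\overline{\grad\f}(\x^k)-\alpha\bar v^k .
\]
Since $e^k$ and $\overline{\grad\f}(\x^k)$ are $\cF^k$-measurable and $\Ex[\bar v^k\mid\cF^k]=0$, the cross term vanishes in conditional expectation. Proposition~\ref{prop:tv_gt_noise_bounds} then gives
\[
\Ex[\|e^{k+1}\|^2\mid\cF^k]\leq\|e^k\|^2-2\alpha\langle\overline{\grad\f}(\x^k),e^k\rangle+\alpha^2\|\overline{\grad\f}(\x^k)\|^2+\frac{\alpha^2\sigma^2}{N}.
\]
It remains to bound the deterministic part $-2\alpha\langle\cdot\rangle+\alpha^2\|\cdot\|^2$ by
\[
-\mu\alpha\|e^k\|^2-\alpha\bigl(f(\bar x^k)-f^\star\bigr)+\frac{3\alpha L}{2N}\|\widehat{\x}^k\|^2 .
\]

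\textbf{Inner product.} We use the standard per-agent three-point inequality. For each $i$, $L$-smoothness and $\mu$-strong convexity of $f_i$ give
\[
\langle\grad f_i(x_i^k),x^\star-\bar x^k\rangle\leq f_i(x^\star)-f_i(\bar x^k)-\tfrac{\mu}{4}\|\bar x^k-x^\star\|^2+\tfrac{L+\mu}{2}\|x_i^k-\bar x^k\|^2 .
\]
To get this, write the inner product as $\langle\grad f_i(x_i^k),x^\star-x_i^k\rangle+\langle\grad f_i(x_i^k),x_i^k-\bar x^k\rangle$. Bound the first piece by strong convexity and the second by smoothness. Then use $\|x_i^k-x^\star\|^2\geq\frac12\|\bar x^k-x^\star\|^2-\|x_i^k-\bar x^k\|^2$. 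Averaging over $i$ and multiplying by $2\alpha$ gives
\[
-2\alpha\langle\overline{\grad\f},e^k\rangle\leq-2\alpha(f(\bar x^k)-f^\star)-\tfrac{\mu\alpha}{2}\|e^k\|^2+\tfrac{\alpha(L+\mu)}{N}\|\widehat{\x}^k\|^2 .
\]
The $\mu$ coefficient here is only $\mu\alpha/2$, not the $\mu\alpha$ in \eqref{centroid_ineq}. I would recover the rest from the function gap: strong convexity of $f$ gives $f(\bar x^k)-f^\star\geq\frac{\mu}{2}\|e^k\|^2$. So keeping one copy of $-\alpha(f(\bar x^k)-f^\star)$ yields an extra $-\frac{\mu\alpha}{2}\|e^k\|^2$. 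This uses one of the two $\alpha$-gap copies and leaves one copy free to absorb the gradient-norm term.

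\textbf{Gradient-norm term.} Write $\overline{\grad\f}(\x^k)=\grad f(\bar x^k)+\bigl(\overline{\grad\f}(\x^k)-\grad f(\bar x^k)\bigr)$. The second piece has squared norm at most $\frac{L^2}{N}\|\widehat{\x}^k\|^2$ by Jensen and smoothness. Then
\[
\alpha^2\|\overline{\grad\f}\|^2\leq2\alpha^2\|\grad f(\bar x^k)\|^2+\frac{2\alpha^2L^2}{N}\|\widehat{\x}^k\|^2\leq4\alpha^2L(f(\bar x^k)-f^\star)+\frac{2\alpha^2L^2}{N}\|\widehat{\x}^k\|^2 ,
\]
using $\|\grad f\|^2\leq2L(f-f^\star)$. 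With $\alpha\leq\frac{1}{4L}$, the first term is at most $\alpha(f-f^\star)$ and the second at most $\frac{\alpha L}{2N}\|\widehat{\x}^k\|^2$.

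\textbf{Collecting terms.} The gap coefficient is $-2\alpha+\alpha$ from the gradient-norm step, and then $-\alpha$ is spent to upgrade $\mu/2$ to $\mu$. That leaves $0$, but we need a residual $-\alpha$.

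\textbf{Main obstacle: constant bookkeeping.} The per-agent inequality must be sharpened so the gap retains a coefficient of $-\alpha$. One option is to use the tighter bound $-\frac{\mu}{2}\|x_i^k-x^\star\|^2$ together with Jensen over $i$:
\[
\frac1N\sum_i\|x_i^k-x^\star\|^2=\|e^k\|^2+\frac1N\|\widehat{\x}^k\|^2\geq\|e^k\|^2 .
\]
This gives $-\mu\alpha\|e^k\|^2$ directly, with no loss. For the smoothness cross term $\langle\grad f_i(x_i^k),x_i^k-\bar x^k\rangle$, compare against $f_i(\bar x^k)$ via the descent lemma. This produces $\frac{L}{2}\|x_i^k-\bar x^k\|^2$, so the consensus coefficient is $\alpha L/N$ from the inner product plus $\alpha L/(2N)$ from the gradient-norm term, totaling $\frac{3\alpha L}{2N}$. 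The gap coefficient is then $-2\alpha+\alpha=-\alpha$, as required.

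Finally, take total expectations to obtain \eqref{centroid_ineq}.
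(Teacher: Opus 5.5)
Your final argument, the one in the ``Main obstacle'' paragraph, is correct and is essentially the paper's proof: you keep the per-agent term $\frac{\mu}{2}\|x_i^k-x^\star\|^2$ and use the variance decomposition, compare the cross term with $f_i(\bar x^k)$ via the descent lemma, bound $\|\overline{\grad\f}(\x^k)\|^2\le 4L(f(\bar x^k)-f(x^\star))+\frac{2L^2}{N}\|\widehat{\x}^k\|^2$, and close with $2\alpha-4\alpha^2L\ge\alpha$ and $\alpha L+2\alpha^2L^2\le\frac{3\alpha L}{2}$, exactly as the paper does. The earlier per-agent inequality with $\frac{\mu}{4}$ and $\frac{L+\mu}{2}$ is a dead end that you rightly abandon, and it should be dropped from the write-up: besides using up the gap term, it would also raise the consensus coefficient to $\frac{\alpha(3L+2\mu)}{2N}$.
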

\begin{proof}
The proof is standard; see, \eg, \cite[Lemma 3]{alghunaim2023enhanced}.
	Conditioning on $\cF^k$ and using
	$\Ex[\bar v^k\mid\cF^k]=0$ in \eqref{tv_gt_centroid} gives
	\begin{align}
		\Ex\!\left[
		\|\bar x^{k+1}-x^\star\|^2
		\,\middle|\,
		\cF^k
		\right]
		&=
		\left\|
		\bar x^k-x^\star
		-\alpha\overline{\grad\f}(\x^k)
		\right\|^2
		+
		\alpha^2
		\Ex\!\left[
		\|\bar v^k\|^2
		\,\middle|\,
		\cF^k
		\right]
		\nonumber\\
		&\leq
		\|\bar x^k-x^\star\|^2
		-2\alpha
		\left\langle
		\bar x^k-x^\star,
		\overline{\grad\f}(\x^k)
		\right\rangle
		\nonumber\\
		&\quad
		+\alpha^2
		\|\overline{\grad\f}(\x^k)\|^2
		+\frac{\alpha^2\sigma^2}{N},
		\label{tv_gt_centroid_conditional}
	\end{align}
	where the last inequality follows from
	\eqref{tv_gt_noise_bounds}.	We first bound the inner-product term. For each agent $i$, $\mu$-strong convexity of $f_i$ implies:
	\begin{equation}
		f_i(x^\star)
		\geq
		f_i(x_i^k)
		+
		\left\langle
		\grad f_i(x_i^k),
		x^\star-x_i^k
		\right\rangle
		+
		\frac{\mu}{2}
		\|x_i^k-x^\star\|^2.
	\end{equation}
	Rearranging gives
	\begin{equation}
		\label{tv_gt_strong_convexity_agent}
		\left\langle
		x_i^k-x^\star,
		\grad f_i(x_i^k)
		\right\rangle
		\geq
		f_i(x_i^k)-f_i(x^\star)
		+
		\frac{\mu}{2}
		\|x_i^k-x^\star\|^2.
	\end{equation}
	Using $
	\bar x^k-x^\star
	=
	x_i^k-x^\star
	+
	\bar x^k-x_i^k$, 
	we obtain
	\begin{align}
		\left\langle
		\bar x^k-x^\star,
		\grad f_i(x_i^k)
		\right\rangle
		&=
		\left\langle
		x_i^k-x^\star,
		\grad f_i(x_i^k)
		\right\rangle
		+
		\left\langle
		\bar x^k-x_i^k,
		\grad f_i(x_i^k)
		\right\rangle
		\nonumber\\
		&\geq
		f_i(x_i^k)-f_i(x^\star)
		+
		\left\langle
		\grad f_i(x_i^k),
		\bar x^k-x_i^k
		\right\rangle
		+
		\frac{\mu}{2}
		\|x_i^k-x^\star\|^2.
		\label{tv_gt_inner_product_agent_pre}
	\end{align}
	On the other hand, the $L$-smoothness of $f_i$ gives:
	\begin{equation}
		f_i(\bar x^k)
		\leq
		f_i(x_i^k)
		+
		\left\langle
		\grad f_i(x_i^k),
		\bar x^k-x_i^k
		\right\rangle
		+
		\frac{L}{2}
		\|\bar x^k-x_i^k\|^2.
	\end{equation}
	Equivalently,
	\begin{equation}
		f_i(x_i^k)
		+
		\left\langle
		\grad f_i(x_i^k),
		\bar x^k-x_i^k
		\right\rangle
		\geq
		f_i(\bar x^k)
		-
		\frac{L}{2}
		\|x_i^k-\bar x^k\|^2.
	\end{equation}
	Substituting this bound into
	\eqref{tv_gt_inner_product_agent_pre} yields
	\begin{align}
		\left\langle
		\bar x^k-x^\star,
		\grad f_i(x_i^k)
		\right\rangle
		&\geq
		f_i(\bar x^k)-f_i(x^\star)
		+
		\frac{\mu}{2}
		\|x_i^k-x^\star\|^2
		-
		\frac{L}{2}
		\|x_i^k-\bar x^k\|^2.
		\label{tv_gt_inner_product_agent}
	\end{align}	
	Averaging \eqref{tv_gt_inner_product_agent} over
	$i=1,\ldots,N$ and recalling $
	\overline{\grad\f}(\x^k)
	=
	\frac1N\sum_{i=1}^N\grad f_i(x_i^k)$, 
	gives
	\begin{align}
		\left\langle
		\bar x^k-x^\star,
		\overline{\grad\f}(\x^k)
		\right\rangle
		&\geq
		f(\bar x^k)-f(x^\star)
		+
		\frac{\mu}{2N}
		\sum_{i=1}^N
		\|x_i^k-x^\star\|^2
		-
		\frac{L}{2N}
		\sum_{i=1}^N
		\|x_i^k-\bar x^k\|^2.
		\label{tv_gt_inner_product_average_pre}
	\end{align}
	The standard variance decomposition gives
	\begin{align}
		\sum_{i=1}^N
		\|x_i^k-x^\star\|^2
		&=
		\sum_{i=1}^N
		\|x_i^k-\bar x^k\|^2
		+
		N\|\bar x^k-x^\star\|^2
		\nonumber \\
		&\geq
		N\|\bar x^k-x^\star\|^2,
		\label{tv_gt_variance_decomposition}
	\end{align}
	where we used
	$\sum_{i=1}^N(x_i^k-\bar x^k)=0$.
	Applying these relations to
	\eqref{tv_gt_inner_product_average_pre} gives
	\begin{equation}
		\label{tv_gt_inner_product_bound}
		\left\langle
		\bar x^k-x^\star,
		\overline{\grad\f}(\x^k)
		\right\rangle
		\geq
		f(\bar x^k)-f(x^\star)
		+
		\frac{\mu}{2}
		\|\bar x^k-x^\star\|^2
		-
		\frac{L}{2N}
		\|\widehat{\x}^k\|^2,
	\end{equation}
	where we used $\|\widehat{\x}^k\|^2=\sum_{i=1}^N
	\|x_i^k-\bar x^k\|^2$.
	Next, decompose the average local gradient as
	\begin{equation}
		\overline{\grad\f}(\x^k)
		=
		\grad f(\bar x^k)
		+
		\frac1N
		\sum_{i=1}^N
		\left(
		\grad f_i(x_i^k)-\grad f_i(\bar x^k)
		\right).
	\end{equation}
	Using $\|a+b\|^2\leq2\|a\|^2+2\|b\|^2$, Jensen's inequality, and
	$L$-smoothness,
	\begin{align}
		\|\overline{\grad\f}(\x^k)\|^2
		&\leq
		2\|\grad f(\bar x^k)\|^2
		+
		\frac{2}{N}
		\sum_{i=1}^N
		\|\grad f_i(x_i^k)-\grad f_i(\bar x^k)\|^2
		\nonumber\\
		&\leq
		4L\bigl(f(\bar x^k)-f(x^\star)\bigr)
		+
		\frac{2L^2}{N}
		\|\widehat{\x}^k\|^2,
		\label{tv_gt_average_gradient_bound}
	\end{align}
	where we also used $
	\|\grad f(\bar x^k)\|^2
	\leq
	2L(f(\bar x^k)-f(x^\star))$. 
	Substituting \eqref{tv_gt_inner_product_bound} and
	\eqref{tv_gt_average_gradient_bound} into
	\eqref{tv_gt_centroid_conditional} gives
	\begin{align}
		\Ex\!\left[
		\|\bar x^{k+1}-x^\star\|^2
		\,\middle|\,
		\cF^k
		\right]
		&\leq
		(1-\mu\alpha)
		\|\bar x^k-x^\star\|^2
		-\bigl(2\alpha-4\alpha^2L\bigr)
		\bigl(f(\bar x^k)-f(x^\star)\bigr)
		\nonumber\\
		&\quad
		+\frac{\alpha L+2\alpha^2L^2}{N}
		\|\widehat{\x}^k\|^2
		+\frac{\alpha^2\sigma^2}{N}.
		\label{tv_gt_centroid_pre_final}
	\end{align}
	Since $\alpha\leq1/(4L)$,
	\begin{equation}
		2\alpha-4\alpha^2L
		\geq
		\alpha,
		\qquad
		\alpha L+2\alpha^2L^2
		\leq
		\frac{3\alpha L}{2}.
	\end{equation}
	Applying these bounds in \eqref{tv_gt_centroid_pre_final} and taking total
	expectation proves \eqref{centroid_ineq}.
\end{proof}

\section{Proofs of the main results}
\label{appendix:main_proofs}

\subsection{Proof of Lemma~\ref{lemma:tv_gt_R}}
\label{app:lemma:tv_gt_R}
\begin{proof}
	Write $r=\ell\tau+t$, where $\ell,t$ are integers with $\ell\geq0$ and
	$0\leq t<\tau$. The transition $\mathbf{\Psi}(k+r,k)$ defined in \eqref{tv_gt_W_transition} can then be decomposed
	into $\ell$ complete blocks of length $\tau$ and one remaining block of
	length $t<\tau$. By Assumption~\ref{assumption:tv_gt_network}, every complete
	$\tau$-block contracts disagreement by at most $\lambda$, whereas the
	remaining block is nonexpansive since each doubly stochastic mixing
	matrix satisfies $\|\widetilde{\W}_j\|_2\leq1$. Hence,
	\begin{equation}
		\label{tv_gt_Psi_window_bound}
		\|\mathbf{\Psi}(k+r,k)\|_2
		\leq
		\lambda^\ell
		=
		\lambda^{\lfloor r/\tau\rfloor},
		\end{equation}
		where, when $\lambda=0$, the usual convention $\lambda^0=1$ is used. 
	Consequently,
	\begin{equation}
		\label{tv_gt_Psi_quadratic_bound}
		\mathbf{\Psi}(k+r,k)^\top\mathbf{\Psi}(k+r,k)
		\preceq
		\lambda^{2\lfloor r/\tau\rfloor}\I.
	\end{equation}
	Since the $r=0$ term in \eqref{tv_gt_P_definition} is $\I$, we have
	$\P_k\succeq\I$. Using \eqref{tv_gt_Psi_quadratic_bound} for the upper
	bound and writing each $r\geq0$ uniquely as $r=\ell\tau+t$ gives
	\begin{align}
		\I
		\preceq
		\P_k
		&=
		\sum_{r=0}^{\infty}
		\mathbf{\Psi}(k+r,k)^\top\mathbf{\Psi}(k+r,k)
		\nonumber\\
		&\preceq
		\sum_{r=0}^{\infty}
		\lambda^{2\lfloor r/\tau\rfloor}\I
		\nonumber\\
		&=
		\sum_{\ell=0}^{\infty}
		\sum_{t=0}^{\tau-1}
		\lambda^{2\ell}\I
		\nonumber\\
		&=
		\tau
		\sum_{\ell=0}^{\infty}
		\lambda^{2\ell}\I
		=
		\frac{\tau}{1-\lambda^2}\I
		=
		\frac1\delta\I.
		\label{tv_gt_P_bounds}
	\end{align}
	Multiplying \eqref{tv_gt_P_bounds} by $\delta$ and recalling
	$\R_k=\delta\P_k$ yields \eqref{tv_gt_R_equivalence}.

	Now we show \eqref{tv_gt_R_lyap}. For $r\geq1$,  by the definition in \eqref{tv_gt_W_transition}, the transition matrix can be factored as
	\begin{equation}
		\label{tv_gt_Psi_factorization}
		\mathbf{\Psi}(k+r,k)
		=
		\mathbf{\Psi}(k+r,k+1)\widetilde{\W}_k.
	\end{equation}
		Separating the $r=0$ term in \eqref{tv_gt_P_definition}, for which
	$\mathbf{\Psi}(k,k)=\I$, and using \eqref{tv_gt_Psi_factorization} gives
	\begin{align}
		\P_k
		&=
		\I
		+
		\sum_{r=1}^{\infty}
		\mathbf{\Psi}(k+r,k)^\top
		\mathbf{\Psi}(k+r,k)
		\nonumber\\
		&=
		\I
		+
		\widetilde{\W}_k^\top
		\left(
		\sum_{r=1}^{\infty}
		\mathbf{\Psi}(k+r,k+1)^\top
		\mathbf{\Psi}(k+r,k+1)
		\right)
		\widetilde{\W}_k.
		\label{tv_gt_P_recursion_intermediate}
	\end{align}
Introducing the change of index $r'=r-1$, the summation in
\eqref{tv_gt_P_recursion_intermediate} satisfies
\begin{align}
&	\sum_{r=1}^{\infty}
	\mathbf{\Psi}(k+r,k+1)^\top
	\mathbf{\Psi}(k+r,k+1)
\nonumber \\
	&=
	\sum_{r'=0}^{\infty}
	\mathbf{\Psi}(k+1+r',k+1)^\top
	\mathbf{\Psi}(k+1+r',k+1)
	=
	\P_{k+1}.
	\label{tv_gt_P_index_shift}
\end{align}
	Consequently,
	\begin{equation}
		\label{tv_gt_P_lyap_identity}
		\P_k
		=
		\I
		+
		\widetilde{\W}_k^\top
		\P_{k+1}
		\widetilde{\W}_k.
	\end{equation}
	Multiplying \eqref{tv_gt_P_lyap_identity} by $\delta$ and recalling
	$\R_k=\delta\P_k$ yields \eqref{tv_gt_R_lyap}.	Finally, from the definition of $\delta$,
	\begin{equation}
		\delta
		=
		\frac{1-\lambda^2}{\tau}
		=
		\frac{\Delta_\lambda(1+\lambda)}{\tau}.
		\label{tv_gt_delta_expansion}
	\end{equation}
	Since $\lambda\in[0,1)$, we have
	$1\leq1+\lambda<2$, and hence
	\begin{equation}
		\frac{\Delta_\lambda}{\tau}
		\leq
		\delta
		\leq
		\frac{2\Delta_\lambda}{\tau}.
	\end{equation}
	Moreover, $0<1-\lambda^2\leq1$ and $\tau\geq1$, so $
		0<\delta
		\leq
		\frac1\tau
		\leq
		1$.
	This proves \eqref{tv_gt_delta_gap}.
\end{proof}

\subsection{Proof of Lemma~\ref{lemma:tv_gt_coupled}}
\label{app:lemma:tv_gt_coupled}
\begin{proof}
	Equation~\eqref{tv_gt_coupled_a} follows from
	\eqref{centroid_ineq} and \eqref{tv_gt_bs_sandwich} after dropping the
	nonnegative function-gap term.

	We next establish the disagreement recursion by first recalling from \eqref{tv_gt_scaled_disagreement} that
	\begin{subequations}
		\label{tv_gt_scaled_disagreement_app}
		\begin{align}
			\widehat{\x}^{k+1}
			&=
			\widetilde{\W}_k
			\left(
			\widehat{\x}^k
			-\eta\mathbf z^k
			-\alpha\u^k
			-\alpha\v^k
			\right),
			\label{tv_gt_scaled_disagreement_x_app}
			\\
			\mathbf z^{k+1}
			&=
			\widetilde{\W}_k\mathbf z^k
			-
			\tfrac{\alpha}{\eta}
			(\I-\W_k)(\u^k+\v^k).
			\label{tv_gt_scaled_disagreement_z_app}
		\end{align}
	\end{subequations}
	 Define
	\begin{equation}
		\label{tv_gt_H_definition}
		\mathbf H_k
		\define
		\widetilde{\W}_k^\top\R_{k+1}\widetilde{\W}_k
		=
		\R_k-\delta\I.
	\end{equation}
	By \eqref{tv_gt_R_equivalence} and \eqref{tv_gt_R_lyap},
	$\zero\preceq\mathbf H_k\preceq\R_k\preceq\I$. Moreover,
	\eqref{tv_gt_z_eta_choice}, \eqref{tv_gt_R_delta_definition}, and \eqref{tv_gt_delta_gap} imply
	\begin{equation}
		\label{tv_gt_eta_delta_relation}
		\eta
		=
		\frac{\delta}{2(1+\lambda)}
		\leq
		\frac{\delta}{2}
		\leq
		\frac12.
	\end{equation}
	Consider first the homogeneous part of
	\eqref{tv_gt_scaled_disagreement_app}. Using \eqref{tv_gt_H_definition},
	its one-step energy is
	\begin{align}
		\mathcal E_{k+1}^{\rm h}
		&\define
		\|\widetilde{\W}_k(\widehat{\x}^k-\eta\mathbf z^k)\|_{\R_{k+1}}^2
		+2\|\widetilde{\W}_k\mathbf z^k\|_{\R_{k+1}}^2
		\nonumber\\
		&=
		\|\widehat{\x}^k-\eta\mathbf z^k\|_{\mathbf H_k}^2
		+2\|\mathbf z^k\|_{\mathbf H_k}^2
		\nonumber\\
		&=
		\mathcal E_k
		-\delta\|\widehat{\x}^k\|^2
		-2\delta\|\mathbf z^k\|^2
		-2\eta(\widehat{\x}^k)^\top\mathbf H_k\mathbf z^k
		+\eta^2\|\mathbf z^k\|_{\mathbf H_k}^2.
		\label{tv_gt_homogeneous_energy_exact}
	\end{align}
	Using
	$2|(\widehat{\x}^k)^\top\mathbf H_k\mathbf z^k|
	\leq\|\widehat{\x}^k\|_{\mathbf H_k}^2+\|\mathbf z^k\|_{\mathbf H_k}^2$, $\mathbf H_k\preceq\I$, and \eqref{tv_gt_eta_delta_relation} gives
	\begin{align}
		\mathcal E_{k+1}^{\rm h}
		&\leq
		\mathcal E_k
		-(\delta-\eta)\|\widehat{\x}^k\|^2
		-(2\delta-\eta-\eta^2)\|\mathbf z^k\|^2
		\nonumber\\
		&\leq
		\mathcal E_k
		-(\delta-\eta)(\|\widehat{\x}^k\|^2+2\|\mathbf z^k\|^2)		\nonumber\\
		&\leq
		\mathcal E_k
		-\frac{\delta}{2}\widetilde{\mathcal E}_k.
		\label{tv_gt_homogeneous_energy_bound}
	\end{align}
	The variables $\widehat{\x}^k$, $\mathbf z^k$, and $\u^k$ are
$\cF^k$-measurable, while
$\Ex[\v^k\mid\cF^k]=\zero$. Hence, conditioning on $\cF^k$ and
expanding the two terms in $\mathcal E_{k+1}$, the cross terms involving
$\v^k$ vanish, and we obtain
\begin{align}
	\Ex[\mathcal E_{k+1}\mid\cF^k]
	={}&
	\bigl\|
	\widetilde{\W}_k
	(\widehat{\x}^k-\eta\mathbf z^k-\alpha\u^k)
	\bigr\|_{\R_{k+1}}^2
	\nonumber\\
	&\quad
	+2\bigl\|
	\widetilde{\W}_k\mathbf z^k
	-\tfrac{\alpha}{\eta}(\I-\W_k)\u^k
	\bigr\|_{\R_{k+1}}^2
	\nonumber\\
	&\quad
	+\alpha^2
	\Ex\!\left[
	\|\widetilde{\W}_k\v^k\|_{\R_{k+1}}^2
	+\frac{2}{\eta^2}
	\|(\I-\W_k)\v^k\|_{\R_{k+1}}^2
	\,\middle|\,\cF^k
	\right].
	\label{tv_gt_network_conditional}
\end{align}
For any positive semidefinite matrix $\R$, vectors $a,b$, and $\beta>0$,
Young's inequality gives
\begin{equation}
	\|a+b\|_{\R}^2
	\leq
	(1+\beta)\|a\|_{\R}^2
	+
	\left(1+\frac{1}{\beta}\right)\|b\|_{\R}^2.
	\label{tv_gt_weighted_young}
\end{equation}
Applying \eqref{tv_gt_weighted_young} with $\beta=\delta/4$ to each of the
first two terms yields
\begin{align}
	\Ex[\mathcal E_{k+1}\mid\cF^k]
	\leq{}&
	\left(1+\frac{\delta}{4}\right)
	\mathcal E_{k+1}^{\rm h}
	\nonumber\\
	&+
	\left(1+\frac{4}{\delta}\right)\alpha^2
	\left(
	\|\widetilde{\W}_k\u^k\|_{\R_{k+1}}^2
	+\frac{2}{\eta^2}
	\|(\I-\W_k)\u^k\|_{\R_{k+1}}^2
	\right)
	\nonumber\\
	&+
	\alpha^2
	\Ex\!\left[
	\|\widetilde{\W}_k\v^k\|_{\R_{k+1}}^2
	+\frac{2}{\eta^2}
	\|(\I-\W_k)\v^k\|_{\R_{k+1}}^2
	\,\middle|\,\cF^k
	\right].
	\label{tv_gt_network_young}
\end{align}
Since $\R_{k+1}\preceq\I$,
$\|\widetilde{\W}_k\|_2\leq1$, and
$\|\I-\W_k\|_2\leq2$, for any vector $q$,
\begin{equation}
	\|\widetilde{\W}_k q\|_{\R_{k+1}}^2
	+\frac{2}{\eta^2}
	\|(\I-\W_k)q\|_{\R_{k+1}}^2
	\leq
	\left(1+\frac{8}{\eta^2}\right)\|q\|^2
	=
	Q\|q\|^2.
\end{equation}
Therefore, using \eqref{tv_gt_homogeneous_energy_bound} and
\eqref{tv_gt_noise_bounds},
\begin{align}
	\Ex[\mathcal E_{k+1}\mid\cF^k]
	&\leq
	\left(1+\frac{\delta}{4}\right)
	\left(
	\mathcal E_k-\frac{\delta}{2}
	\widetilde{\mathcal E}_k
	\right)
	+
	\left(1+\frac{4}{\delta}\right)
	\alpha^2Q\|\u^k\|^2
	+
	\alpha^2QN\sigma^2
	\nonumber\\
	&\leq
	\mathcal E_k
	-\frac{\delta}{4}\widetilde{\mathcal E}_k
	+
	\frac{5\alpha^2Q}{\delta}\|\u^k\|^2
	+
	\alpha^2QN\sigma^2.
	\label{tv_gt_network_pre_coupled}
\end{align}
In the last inequality, we used
$\mathcal E_k\leq\widetilde{\mathcal E}_k$ to bound
$(\delta/4)\mathcal E_k
\leq(\delta/4)\widetilde{\mathcal E}_k$,
discarded the nonpositive term
$-(\delta^2/8)\widetilde{\mathcal E}_k$,
and used $0<\delta\leq1$ to obtain
$1+4/\delta\leq5/\delta$. 	For the gradient gap, split each entry of $	\u^k=
\grad\f(\x^k)-\grad\f(\x^\star)$ at $\bar x^k$ and use
	$L$-smoothness:
	\begin{align}
		\|\u^k\|^2
		&\leq
		2\sum_{i=1}^{N}
		\|\grad f_i(x_i^k)-\grad f_i(\bar x^k)\|^2
		+2\sum_{i=1}^{N}
		\|\grad f_i(\bar x^k)-\grad f_i(x^\star)\|^2
		\nonumber\\
		&\leq
		2L^2\|\widehat{\x}^k\|^2
		+2NL^2\|\bar x^k-x^\star\|^2.
		\label{tv_gt_u_bound}
	\end{align}
	Taking expectations, dividing by $N$, and using
	\eqref{tv_gt_bs_sandwich} yields
	\begin{equation}
		\label{tv_gt_u_coupled}
		\frac1N\Ex\|\u^k\|^2
		\leq
		2L^2(\widetilde{X}_k+\widetilde E_k).
	\end{equation}
	Taking total expectation in \eqref{tv_gt_network_pre_coupled}, dividing
	by $N$, and substituting \eqref{tv_gt_u_coupled} gives
	\eqref{tv_gt_coupled_b}.
\end{proof}

\subsection{Proof of Theorem~\ref{thm:tv_gt_coupled_contraction}}
\label{app:thm:tv_gt_coupled_contraction}
\begin{proof}
		Since $\kappa=L/\mu$, the stepsize cap in \eqref{tv_gt_coupled_stepsize}
		can be written as
		\begin{equation}
			\bar\alpha
			=
			\frac{\delta}{\sqrt{320}\,L\sqrt{Q\kappa}}
			=
			\sqrt{\frac{\mu\delta^2}{320QL^3}}.
			\label{tv_gt_alpha_bar_forms}
		\end{equation}
		Since $\mu\leq L$, $0<\delta\leq1$, and $Q\geq1$, this gives
		\begin{equation}
			\alpha
			\leq
			\sqrt{\frac{\mu\delta^2}{320QL^3}}
			\leq
			\frac{1}{\sqrt{320}L}
			\leq
			\frac{1}{4L}.
			\label{tv_gt_stepsize_implies_smooth}
		\end{equation}
		Moreover, $4\leq\sqrt{320}\sqrt{Q}\,\kappa^{3/2}$ together with
		\eqref{tv_gt_alpha_bar_forms} yields
		\begin{equation}
			\bar\alpha
			=
			\frac{\delta}{\sqrt{320}\,L\sqrt{Q\kappa}}
			\leq
			\frac{\delta}{4\mu},
			\label{tv_gt_stepsize_implies_mu}
		\end{equation}
		and therefore
	\begin{equation}
		\alpha\leq\frac{1}{4L},
		\qquad
		\alpha\leq\frac{\delta}{4\mu}.
		\label{tv_gt_stepsize_consequences}
	\end{equation}
	For compactness, write
	\begin{equation}
		\label{tv_gt_c_definition}
		\chi
		\define
		\frac{10\alpha^2QL^2}{\delta}.
	\end{equation}
	By \eqref{tv_gt_stepsize_implies_smooth},
	Lemma~\ref{lemma:tv_gt_coupled} applies, using $V_k=
		\widetilde{X}_k+\omega E_k$,
		$\omega=
		\frac{16\alpha L}{\delta}$, and
	\eqref{tv_gt_coupled_recursion} gives
	\begin{align}
		V_{k+1}
		&\leq
		\bigl(1-\mu\alpha+\omega \chi\bigr)\widetilde{X}_k
		+\omega E_k
		+\left[
		\frac{3\alpha L}{2}
		-\frac{\omega\delta}{4}
		+\omega \chi
		\right]\widetilde E_k
		\nonumber\\
		&\quad
		+\frac{\alpha^2\sigma^2}{N}
		+\omega\alpha^2Q\sigma^2.
		\label{tv_gt_V_pre}
	\end{align}
	The stepsize condition also gives
	\begin{equation}
		\label{tv_gt_omega_c_small}
		\omega \chi
		=
		\frac{160\alpha^3QL^3}{\delta^2}
		\leq
		\frac{\mu\alpha}{2}.
	\end{equation}
	Hence, $
		1-\mu\alpha+\omega \chi
		\leq
		1-\frac{\mu\alpha}{2}$.
	Moreover, $\omega\delta/4=4\alpha L$, while
	\eqref{tv_gt_omega_c_small} and $\mu\leq L$ give
	$\omega \chi\leq\alpha L/2$. Therefore,
	\begin{equation}
		\label{tv_gt_s_coefficient}
		\frac{3\alpha L}{2}
		-\frac{\omega\delta}{4}
		+\omega \chi
		\leq
		-2\alpha L.
	\end{equation}
	Hence, using these in \eqref{tv_gt_V_pre}, we get
		\begin{align}
		V_{k+1}
		&\leq
		(1-\tfrac{\mu\alpha}{2})\widetilde{X}_k
		+\omega E_k
		-2\alpha L \widetilde E_k
		\nonumber\\
		&\quad
		+\frac{\alpha^2\sigma^2}{N}
		+\omega\alpha^2Q\sigma^2.
		\label{tv_gt_V_pre2}
	\end{align}
	Since $E_k\leq\widetilde E_k$ by
	\eqref{tv_gt_bs_sandwich},
	\begin{align}
		\omega E_k-2\alpha L\widetilde E_k
		&\leq
		\omega\left(1-\frac{2\alpha L}{\omega}\right)E_k
		\nonumber\\
		&=
		\omega\left(1-\frac{\delta}{8}\right)E_k
		\nonumber\\
		&\leq
		\omega\left(1-\frac{\mu\alpha}{2}\right)E_k,
		\label{tv_gt_b_coefficient}
	\end{align}
	where the last inequality follows from
	\eqref{tv_gt_stepsize_consequences}. Finally,
	\begin{equation}
		\label{tv_gt_weighted_noise}
		\omega\alpha^2Q\sigma^2
		=
		\frac{16\alpha^3LQ\sigma^2}{\delta}.
	\end{equation}
	Substituting 
	\eqref{tv_gt_b_coefficient} and
	\eqref{tv_gt_weighted_noise} into
	\eqref{tv_gt_V_pre2} proves
	\eqref{tv_gt_V_recursion}.
\end{proof}

\subsection{Proof of Corollary~\ref{corr:tv_gt_main}}
\label{app:corr:tv_gt_main}

For completeness, define the technical stepsize and initialization quantities
used in the proof:
\begin{subequations}
\begin{align}
	\bar\alpha
	&\define 
	\frac{\delta}{\sqrt{320}\,L\sqrt{Q\kappa}} 	\label{tv_gt_alpha_bar}
 \\
 C_0&\define  \widetilde{X}_0
 +
 \frac{16\bar\alpha L}{\delta N}
 \|\widehat{\x}^0\|^2
 +
 \frac{32\bar\alpha^3L}{\eta^2\delta N}
 \|\grad\f(\x^\star)\|^2 \\
		T_K
		&\define
		\max\left\{
		e,\,
		\frac{\mu^2NC_0K}{2\sigma^2}
		\right\}  	\label{tv_gt_T_K}
		\\
		\alpha_K
		&\define
		\min\left\{
		\frac{2\ln T_K}{\mu K},
		\bar\alpha
		\right\}. 	\label{tv_gt_alpha_K}
\end{align}	
\end{subequations}
Here,  the choice $T_K\geq e$ ensures $\ln T_K\geq1$ and hence a strictly
positive candidate stepsize.

\begin{proof}
	By Theorem~\ref{thm:tv_gt_coupled_contraction},
	\begin{equation}
		\label{tv_gt_V_recursion_corollary}
		V_{k+1}
		\leq
		\left(1-\frac{\mu\alpha}{2}\right)V_k
		+
		\frac{\alpha^2\sigma^2}{N}
		+
		\frac{16\alpha^3LQ\sigma^2}{\delta}
	\end{equation}
	holds whenever $0<\alpha\leq\bar\alpha$. Iterating \eqref{tv_gt_V_recursion_corollary} and using $
		\sum_{t=0}^{K-1}
		\left(1-\frac{\mu\alpha}{2}\right)^t
		\leq
		\frac{2}{\mu\alpha}$,
	together with
	$1-t\leq e^{-t}$, gives
	\begin{equation}
		\label{tv_gt_V_finite_corollary}
		V_K
		\leq
		\exp\left(-\frac{\mu\alpha K}{2}\right)V_0
		+
		\frac{2\alpha\sigma^2}{\mu N}
		+
		\frac{32\alpha^2LQ\sigma^2}{\mu\delta}.
	\end{equation}
	Since $\y^0=\zero$, \eqref{tv_gt_ytilde_definition} and
	\eqref{tv_gt_z_eta_choice} give
	$\mathbf z^0=(\alpha/\eta)\grad\f(\x^\star)$. Hence,
	using $\R_0\preceq\I$,
	\begin{equation}
		\label{tv_gt_b0_bound}
			 E_0
		\leq
		\frac1N\|\widehat{\x}^0\|^2
		+
		\frac{2\alpha^2}{\eta^2N}
		\|\grad\f(\x^\star)\|^2.
	\end{equation}
	Then $
	V_0
	\leq
	C_0$,
	and using $\widetilde X_K\leq V_K$ in
	\eqref{tv_gt_V_finite_corollary} gives
	\begin{equation}
		\label{tv_gt_generic_K_bound}
		\widetilde{X}_K
		\leq
		C_0
		\exp\left(-\frac{\mu\alpha K}{2}\right)
		+
		\frac{2\alpha\sigma^2}{\mu N}
		+
		\frac{32\alpha^2LQ\sigma^2}{\mu\delta}.
	\end{equation}
	Set $\alpha=\alpha_K$. Since
	$\alpha_K\leq2\ln T_K/(\mu K)$, the two stochastic terms in
	\eqref{tv_gt_generic_K_bound} satisfy
	\begin{subequations}
		\label{tv_gt_alphaK_stochastic_bounds}
		\begin{align}
			\frac{2\alpha_K\sigma^2}{\mu N}
			&\leq
			\frac{4\sigma^2\ln T_K}{\mu^2NK},
			\\
			\frac{32\alpha_K^2LQ\sigma^2}{\mu\delta}
			&\leq
			\frac{128LQ\sigma^2\ln^2T_K}
			{\mu^3\delta K^2}.
		\end{align}
	\end{subequations}
	If $\alpha_K=2\ln T_K/(\mu K)$, then
	\begin{equation}
		\label{tv_gt_initial_log_bound}
		C_0
		\exp\left(-\frac{\mu\alpha_KK}{2}\right)
		=
		\frac{C_0}{T_K}
		\overset{\eqref{tv_gt_T_K}}{\leq}
		\frac{2\sigma^2}{\mu^2NK}.
	\end{equation}
	If instead $\alpha_K=\bar\alpha$, we retain the exponentially decaying
	initialization term. Consequently,
	\begin{align}
		\widetilde{X}_K
		&\leq
		\frac{2(1+2\ln T_K)\sigma^2}{\mu^2NK}
		+
		\frac{128LQ\sigma^2\ln^2T_K}{\mu^3\delta K^2}
		\nonumber\\
		&\quad+
		C_0
		\exp\left(-\frac{\mu\bar\alpha K}{2}\right).
		\label{tv_gt_transient_rate}
	\end{align}
	It remains to expose the network dependence. From \eqref{tv_gt_delta_gap} and \eqref{tv_gt_Q_definition}
	\begin{equation}
		\label{tv_gt_delta_Q_orders}
		\delta
		=
		\Theta\left(\frac{1-\lambda}{\tau}\right),
		\qquad
		Q
		=
		\Theta\left(\frac{\tau^2}{(1-\lambda)^2}\right).
	\end{equation}
	More explicitly, since $\tau\geq1$ and $\Delta_\lambda\leq1$,
	\begin{equation}
		\label{tv_gt_Q_bounds_exact}
		\frac{32\tau^2}{\Delta_\lambda^2}
		\leq
		Q
		\leq
		\frac{33\tau^2}{\Delta_\lambda^2}.
	\end{equation}
	Combining \eqref{tv_gt_delta_gap} and
	\eqref{tv_gt_Q_bounds_exact} yields
	\begin{equation}
		\label{tv_gt_Q_delta_exact}
		16\frac{\tau^3}{\Delta_\lambda^3}
		\leq
		\frac{Q}{\delta}
		\leq
		33\frac{\tau^3}{\Delta_\lambda^3}.
	\end{equation}
	Thus the first two terms in \eqref{tv_gt_transient_rate} give the first
	two terms in \eqref{tv_gt_transient_rate_explicit}, up to logarithmic
	factors and constants depending on $\mu$ and $L$.	For the initialization term, using $\delta\geq\Delta_\lambda/\tau$ and
	$Q\leq33\tau^2/\Delta_\lambda^2$ from \eqref{tv_gt_Q_bounds_exact},
	\begin{equation}
		\label{tv_gt_alpha_inverse_exact}
		\frac{\mu\bar\alpha}{2}
		=
		\frac{\delta}{2\sqrt{320}\,\sqrt{Q}\,\kappa^{3/2}}
		\geq
		\frac{\Delta_\lambda^2}{2\sqrt{320\cdot33}\,\kappa^{3/2}\tau^2}
		\geq
		\frac{\Delta_\lambda^2}{206\,\kappa^{3/2}\tau^2},
	\end{equation}
	so
	$\exp(-\mu\bar\alpha K/2)
	\leq\exp\bigl(-c\,(1-\lambda)^2K/(\kappa^{3/2}\tau^2)\bigr)$
	with $c=1/206$. This proves the exponential term in
	\eqref{tv_gt_transient_rate_explicit}.

	Finally, we determine when the network-dependent and initialization terms
	become lower order than the centralized statistical term. Ignoring logarithmic
	factors, the first two stochastic terms in
	\eqref{tv_gt_transient_rate} scale as
	\[
	\frac{\sigma^2}{\mu^2 N K}
	\qquad\text{and}\qquad
	\frac{LQ\sigma^2}{\mu^3\delta K^2},
	\]
	respectively. Requiring the network-dependent term to be no larger than the
	centralized term gives
	\begin{align}
		\frac{LQ\sigma^2}{\mu^3\delta K^2}
		&\lesssim
		\frac{\sigma^2}{\mu^2 N K}
		\nonumber\\
		\Longleftrightarrow\qquad
		K
		&\gtrsim
		N\frac{L}{\mu}\frac{Q}{\delta}
		=
		N\kappa\frac{Q}{\delta}.
	\end{align}
	Since $
	\frac{Q}{\delta}
	=
	\Theta\left(
	\frac{\tau^3}{(1-\lambda)^3}
	\right)$,
	this yields $
		K
		=
		\widetilde{\mathcal O}\left(
		\frac{N\kappa\tau^3}{(1-\lambda)^3}
		\right)$. For the initialization term, we require
		\begin{equation}
			C_0
			\exp\left(-\frac{\mu\bar\alpha K}{2}\right)
			\lesssim
			\frac{\sigma^2}{\mu^2 N K}.
		\end{equation}
		Equivalently,
		\begin{equation}
			K
			\gtrsim
			\frac{1}{\mu\bar\alpha}
			\log\left(
			\frac{\mu^2 N C_0K}{\sigma^2}
			\right).
		\end{equation}
		Thus, up to logarithmic factors, the initialization transient is determined by
		$1/(\mu\bar\alpha)$. Using $
			\frac{1}{\mu\bar\alpha}
			=
			\Theta\left(
			\frac{\kappa^{3/2}\tau^2}{(1-\lambda)^2}
			\right)$,
		we obtain
		\begin{equation}
			\label{tv_gt_initial_transient_proof}
			K
			=
			\widetilde{\mathcal O}\left(
			\frac{\kappa^{3/2}\tau^2}{(1-\lambda)^2}
			\right).
		\end{equation}
	Combining the two requirements gives
	\eqref{tv_gt_total_transient}.
\end{proof}
\subsection{Proof of Corollary~\ref{corr:tv_gt_last_iterate_consensus}}
\label{app:tv_gt_last_iterate_consensus}
\begin{proof}
	Define
	\[
	r\define 1-\frac{\delta}{8},
	\qquad
	\rho\define 1-\frac{\mu\alpha}{2}.
	\]
	Under the stepsize condition,
	\[
	\frac{10\alpha^2QL^2}{\delta}
	\leq
	\frac{\delta}{8}.
	\]
	Hence, using $E_k\leq\widetilde E_k$ in
	\eqref{tv_gt_coupled_b},
	\begin{equation}
		E_{k+1}
		\leq
		rE_k
		+
		\frac{10\alpha^2QL^2}{\delta}\widetilde X_k
		+
		\alpha^2Q\sigma^2.
		\label{tv_gt_consensus_pointwise_recursion}
	\end{equation}
	Theorem~\ref{thm:tv_gt_coupled_contraction} also gives
	\begin{equation}
		\widetilde X_k
		\leq
		\rho^k C_0
		+
		\frac{2\alpha\sigma^2}{\mu N}
		+
		\frac{32\alpha^2LQ\sigma^2}{\mu\delta}.
	\end{equation}
	Iterating \eqref{tv_gt_consensus_pointwise_recursion} gives
	\begin{align}
		E_K
		\leq{}&
		r^K E_0
		+
		\frac{10\alpha^2QL^2}{\delta}C_0
		\sum_{k=0}^{K-1}r^{K-1-k}\rho^k
		\nonumber\\
		&+
		\frac{8\alpha^2Q\sigma^2}{\delta}
		+
		\frac{160\alpha^3QL^2\sigma^2}{\mu N\delta^2}
		+
		\frac{2560\alpha^4Q^2L^3\sigma^2}{\mu\delta^3}.
		\label{tv_gt_consensus_iterated}
	\end{align}
	By \eqref{tv_gt_stepsize_consequences},
	$\mu\alpha/2\leq\delta/8$, so $r\leq\rho$ and
	\[
	\sum_{k=0}^{K-1}r^{K-1-k}\rho^k
	\leq
	K\rho^{K-1}.
	\]
	Set $\alpha=\alpha_K$. Since
	$\alpha_K\leq2\ln T_K/(\mu K)$ and $\alpha_K\leq\bar\alpha$,
	the three noise terms in \eqref{tv_gt_consensus_iterated} satisfy
	\begin{align}
		&\frac{8\alpha_K^2Q\sigma^2}{\delta}
		+
		\frac{160\alpha_K^3QL^2\sigma^2}{\mu N\delta^2}
		+
		\frac{2560\alpha_K^4Q^2L^3\sigma^2}{\mu\delta^3}
		\nonumber\\
		&\qquad=
		\widetilde{\mathcal O}\left(
		\frac{Q\sigma^2}{\delta K^2}
		+
		\frac{\sqrt Q\,\sigma^2}{\delta N K^2}
		\right)
		=
		\widetilde{\mathcal O}\left(
		\frac{\tau^3\sigma^2}
		{(1-\lambda)^3K^2}
		\right),
		\label{tv_gt_consensus_noise_bound}
	\end{align}
	where we used
	\[
	\frac{160\bar\alpha QL^2}{\mu\delta^2}
	=
	\frac{\sqrt{80Q\kappa}}{\delta},
	\qquad
	\frac{2560\bar\alpha^2Q^2L^3}{\mu\delta^3}
	=
	\frac{8Q}{\delta},
	\]
	and \eqref{tv_gt_Q_delta_exact}.
	For the initialization term,
	$1-t\leq e^{-t}$ and
	$\sup_{x>0}x^2e^{-x}=4e^{-2}$ give
	\begin{equation}
		r^KE_0
		\leq
		\frac{256e^{-2}E_0}{\delta^2K^2}
		=
		\mathcal O\left(
		\frac{\tau^2E_0}{(1-\lambda)^2K^2}
		\right).
		\label{tv_gt_consensus_initial_bound}
	\end{equation}
	It remains to bound the cross term. Since
	\[
	\frac{10\alpha_K^2QL^2}{\delta}
	\leq
	\frac{\delta}{32\kappa}
	\leq1,
	\]
	if $\alpha_K=2\ln T_K/(\mu K)$, then
	\[
	\rho^{K-1}
	\leq
	\frac{e^{1/8}}{T_K},
	\]
	and hence, by \eqref{tv_gt_T_K},
	\begin{equation}
		\frac{10\alpha_K^2QL^2}{\delta}
		C_0K\rho^{K-1}
		=
		\widetilde{\mathcal O}\left(
		\frac{\tau^3\sigma^2}
		{(1-\lambda)^3K^2}
		\right).
	\end{equation}
	If $\alpha_K=\bar\alpha$, then
	\begin{equation}
		\frac{10\alpha_K^2QL^2}{\delta}
		C_0K\rho^{K-1}
		\leq
		e^{1/8}C_0K
		\exp\left(
		-\frac{(1-\lambda)^2K}
		{206\,\kappa^{3/2}\tau^2}
		\right).
	\end{equation}
	Combining these bounds with
	$\frac1N\Ex\|\widehat{\x}^K\|_{\R_K}^2\leq E_K$
	proves \eqref{tv_gt_last_iterate_consensus_bound}.
\end{proof}

\subsection{Proof of Lemma \ref{lemma:tv_gt_convex_coupled}}
\label{app:tv_gt_convex_coupled}

\begin{proof}
	Lemma~\ref{lemma:tv_gt_average} remains valid after setting
	$\mu=0$. Therefore,
	\begin{equation}
		\label{tv_gt_cvx_centroid_from_average}
		\widetilde X_{k+1}
		\leq
		\widetilde X_k
		-\alpha\widetilde F_k
		+\frac{3\alpha L}{2N}
		\Ex\|\widehat{\x}^k\|^2
		+\frac{\alpha^2\sigma^2}{N}.
	\end{equation}
	Using
	$\frac1N\Ex\|\widehat{\x}^k\|^2\leq\widetilde E_k$ from
	\eqref{tv_gt_bs_sandwich} proves
	\eqref{tv_gt_cvx_centroid_recursion}.
	
	Now for the disagreement recursion, we know that for every convex $L$-smooth function $h$ and all $x,y$,
	\begin{equation}
		\label{tv_gt_cvx_smooth_convex_gradient}
		\|\grad h(x)-\grad h(y)\|^2
		\leq
		2L\left(
		 h(x)- h(y)
		-\left\langle\grad h(y),x-y\right\rangle
		\right).
	\end{equation}
	Recall that
	$\u^k=\grad\f(\x^k)-\grad\f(\x^\star)$. Splitting each local
	gradient difference at $\bar x^k$ and using
	$\|p+q\|^2\leq2\|p\|^2+2\|q\|^2$ gives
	\begin{align}
		\|\u^k\|^2
		&\leq
		2\sum_{i=1}^N
		\|\grad f_i(x_i^k)-\grad f_i(\bar x^k)\|^2
		\nonumber\\
		&\quad+
		2\sum_{i=1}^N
		\|\grad f_i(\bar x^k)-\grad f_i(x^\star)\|^2.
		\label{tv_gt_cvx_u_split}
	\end{align}
	The first sum satisfies
	\begin{equation}
		\label{tv_gt_cvx_u_first}
		2\sum_{i=1}^N
		\|\grad f_i(x_i^k)-\grad f_i(\bar x^k)\|^2
		\leq
		2L^2\|\widehat{\x}^k\|^2.
	\end{equation}
	Applying \eqref{tv_gt_cvx_smooth_convex_gradient} to the second sum
	with $x=\bar x^k$ and $y=x^\star$ gives
	\begin{align}
		&2\sum_{i=1}^N
		\|\grad f_i(\bar x^k)-\grad f_i(x^\star)\|^2
		\nonumber\\
		&\qquad\leq
		4L\sum_{i=1}^N
		\left(
		f_i(\bar x^k)-f_i(x^\star)
		-\left\langle
		\grad f_i(x^\star),\bar x^k-x^\star
		\right\rangle
		\right)
		\nonumber\\
		&\qquad=
		4NL\bigl(f(\bar x^k)-f(x^\star)\bigr),
		\label{tv_gt_cvx_u_second}
	\end{align}
	where we used
	$\sum_{i=1}^N\grad f_i(x^\star)
	=N\grad f(x^\star)=0$.
	Combining \eqref{tv_gt_cvx_u_split}--\eqref{tv_gt_cvx_u_second},
	taking expectations, and using
	$\frac1N\Ex\|\widehat{\x}^k\|^2\leq\widetilde E_k$ yields
	\begin{equation}
		\label{tv_gt_cvx_u_bound}
		\frac1N\Ex\|\u^k\|^2
		\leq
		2L^2\widetilde E_k
		+
		4L\widetilde F_k.
	\end{equation}
	The argument leading to \eqref{tv_gt_network_pre_coupled} uses only
	the network and noise assumptions and therefore remains unchanged.
	Taking total expectation in \eqref{tv_gt_network_pre_coupled} and
	dividing by $N$ gives
	\begin{equation}
		\label{tv_gt_cvx_network_pre}
		E_{k+1}
		\leq
		E_k
		-\frac{\delta}{4}\widetilde E_k
		+\frac{5\alpha^2Q}{\delta N}\Ex\|\u^k\|^2
		+\alpha^2Q\sigma^2.
	\end{equation}
	Substituting \eqref{tv_gt_cvx_u_bound} into
	\eqref{tv_gt_cvx_network_pre} proves
	\eqref{tv_gt_cvx_disagreement_recursion}.
\end{proof}

\subsection{Proof of Theorem \ref{thm:tv_gt_convex}}
\label{app:tv_gt_convex_thm}
\begin{proof}
	Since $\delta\leq1$ and $Q\geq1$, the stepsize condition
	\eqref{tv_gt_cvx_alpha_bnd} implies $\alpha\leq1/(4L)$, so
	Lemma~\ref{lemma:tv_gt_convex_coupled} applies.	Summing \eqref{tv_gt_cvx_centroid_recursion} from $k=0$ to $K-1$,
	using $\widetilde X_K\geq0$, and dividing by $\alpha K$ gives
	\begin{equation}
		\label{tv_gt_cvx_H_pre}
		\frac1K\sum_{k=0}^{K-1}\widetilde F_k
		\leq
		\frac{\widetilde X_0}{\alpha K}
		+
		\frac{3L}{2K}\sum_{k=0}^{K-1}\widetilde E_k
		+
		\frac{\alpha\sigma^2}{N}.
	\end{equation}
	Moreover, \eqref{tv_gt_cvx_alpha_bnd} implies
	\begin{equation}
		\label{tv_gt_cvx_network_absorb_one}
		\frac{10\alpha^2QL^2}{\delta}
		\leq
		\frac{\delta}{48}
		\leq
		\frac{\delta}{8}.
	\end{equation}
	Therefore, \eqref{tv_gt_cvx_disagreement_recursion} gives
	\begin{equation}
		\label{tv_gt_cvx_b_simplified}
		E_{k+1}
		\leq
		E_k
		-\frac{\delta}{8}\widetilde E_k
		+\frac{20\alpha^2QL}{\delta}\widetilde F_k
		+\alpha^2Q\sigma^2.
	\end{equation}
	Summing, using $E_K\geq0$, and dividing by $K$ yields
	\begin{equation}
		\label{tv_gt_cvx_S_pre}
		\frac1K\sum_{k=0}^{K-1}\widetilde E_k
		\leq
		\frac{8E_0}{\delta K}
		+
		\frac{160\alpha^2QL}{\delta^2K}
		\sum_{k=0}^{K-1}\widetilde F_k
		+
		\frac{8\alpha^2Q\sigma^2}{\delta}.
	\end{equation}
	Substituting \eqref{tv_gt_cvx_S_pre} into
	\eqref{tv_gt_cvx_H_pre} gives
	\begin{align}
		\frac1K\sum_{k=0}^{K-1}\widetilde F_k
		\leq{}&
		\frac{\widetilde X_0}{\alpha K}
		+
		\frac{12LE_0}{\delta K}
		+
		\frac{240\alpha^2QL^2}{\delta^2K}
		\sum_{k=0}^{K-1}\widetilde F_k
		\nonumber\\
		&+
		\frac{12\alpha^2QL\sigma^2}{\delta}
		+
		\frac{\alpha\sigma^2}{N}.
		\label{tv_gt_cvx_H_absorb_pre}
	\end{align}
	Again by \eqref{tv_gt_cvx_alpha_bnd},
	\begin{equation}
		\label{tv_gt_cvx_H_absorb}
		\frac{240\alpha^2QL^2}{\delta^2}
		\leq
		\frac12.
	\end{equation}
	Hence
	\begin{equation}
		\label{tv_gt_cvx_H_bound}
		\frac1K\sum_{k=0}^{K-1}\widetilde F_k
		\leq
		\frac{2\widetilde X_0}{\alpha K}
		+
		\frac{24LE_0}{\delta K}
		+
		\frac{24\alpha^2QL\sigma^2}{\delta}
		+
		\frac{2\alpha\sigma^2}{N}.
	\end{equation}
	Multiplying \eqref{tv_gt_cvx_S_pre} by $L$ and using
	\begin{equation}
		\frac{160\alpha^2QL^2}{\delta^2}
		\leq
		\frac13
	\end{equation}
	gives
	\begin{equation}
		\label{tv_gt_cvx_LS_bound}
		\frac{L}{K}\sum_{k=0}^{K-1}\widetilde E_k
		\leq
		\frac{8LE_0}{\delta K}
		+
		\frac{1}{3K}\sum_{k=0}^{K-1}\widetilde F_k
		+
		\frac{8\alpha^2QL\sigma^2}{\delta}.
	\end{equation}
	Adding $\frac1K\sum_{k=0}^{K-1}\widetilde F_k$ to
	\eqref{tv_gt_cvx_LS_bound} and then applying
	\eqref{tv_gt_cvx_H_bound} yields
	\begin{align}
		&\frac1K\sum_{k=0}^{K-1}
		\left(
		\widetilde F_k
		+
		L\widetilde E_k
		\right)
		\nonumber\\
		&\qquad\leq
		\frac{8\widetilde X_0}{3\alpha K}
		+
		\frac{40LE_0}{\delta K}
		+
		\frac{8\alpha\sigma^2}{3N}
		+
		\frac{40\alpha^2QL\sigma^2}{\delta}.
		\label{tv_gt_cvx_H_plus_LS}
	\end{align}
	Finally,
	$\widetilde F_k=\Ex[f(\bar x^k)-f(x^\star)]$ and
	$\frac1N\Ex\|\widehat{\x}^k\|^2\leq\widetilde E_k$, which proves
	\eqref{tv_gt_cvx_finite_horizon}.
\end{proof}

\subsection{Proof of Corollary \ref{corr:tv_gt_convex_rate}}
\label{app:tv_gt_convex_rate}

\begin{proof}
	Since $\y^0=\zero$, definitions \eqref{tv_gt_ytilde_definition} and
	\eqref{tv_gt_z_eta_choice} give
	\begin{equation}
		\label{tv_gt_cvx_z0}
		\mathbf z^0
		=
		\frac{\alpha}{\eta}\grad\f(\x^\star).
	\end{equation}
	Hence
	\begin{equation}
		\label{tv_gt_cvx_E0_bound}
		E_0
		=
		\frac1N
		\left(
		\|\widehat{\x}^0\|_{\R_0}^2
		+
		\frac{2\alpha^2}{\eta^2}
		\|\grad\f(\x^\star)\|_{\R_0}^2
		\right)
		\leq
		\bar E_0,
	\end{equation}
	for every $\alpha\leq\alpha_{\rm cvx}$.	Choose
	\begin{equation}
		\label{tv_gt_cvx_alpha_K}
		\alpha_K
		\define
		\min\left\{
		\sqrt{\frac{N\widetilde X_0}{\sigma^2K}},
		\left(
		\frac{\widetilde X_0\delta}
		{15QL\sigma^2K}
		\right)^{1/3},
		\alpha_{\rm cvx}
		\right\}.
	\end{equation}
	Using Theorem~\ref{thm:tv_gt_convex} and
	\eqref{tv_gt_cvx_E0_bound}, we have
	\begin{equation}
		\label{tv_gt_cvx_tuning_pre}
		\mathcal R_K
		\leq
		\frac{A}{\alpha_K}
		+
		B\alpha_K
		+
		C\alpha_K^2
		+
		\frac{40L\bar E_0}{\delta K},
	\end{equation}
	where $\mathcal R_K$ denotes the left-hand side of
	\eqref{tv_gt_cvx_finite_horizon} and
	\[
	A\define\frac{8\widetilde X_0}{3K},
	\qquad
	B\define\frac{8\sigma^2}{3N},
	\qquad
	C\define\frac{40QL\sigma^2}{\delta}.
	\]
	The choice \eqref{tv_gt_cvx_alpha_K} is equivalently
	\[
	\alpha_K
	=
	\min\left\{
	\sqrt{\frac AB},
	\left(\frac AC\right)^{1/3},
	\alpha_{\rm cvx}
	\right\}.
	\]
	Therefore,
	\[
	B\alpha_K\leq\sqrt{AB},
	\qquad
	C\alpha_K^2\leq A^{2/3}C^{1/3},
	\]
	and
	\[
	\frac{A}{\alpha_K}
	\leq
	\sqrt{AB}
	+
	A^{2/3}C^{1/3}
	+
	\frac{A}{\alpha_{\rm cvx}}.
	\]
	Substituting these bounds into \eqref{tv_gt_cvx_tuning_pre} gives
	\begin{equation}
		\mathcal R_K
		\leq
		2\sqrt{AB}
		+
		2A^{2/3}C^{1/3}
		+
		\frac{A}{\alpha_{\rm cvx}}
		+
		\frac{40L\bar E_0}{\delta K}.
		\label{tv_gt_cvx_tuned_pre_network}
	\end{equation}	
	Finally,
	\[
	\frac{Q}{\delta}
	=
	\Theta\left(
	\frac{\tau^3}{(1-\lambda)^3}
	\right),
	\qquad
	\frac1{\alpha_{\rm cvx}}
	=
	\Theta\left(
	\frac{L\tau^2}{(1-\lambda)^2}
	\right),
	\qquad
	\frac1\delta
	=
	\Theta\left(
	\frac{\tau}{1-\lambda}
	\right).
	\]
	Applying these relations to
	\eqref{tv_gt_cvx_tuned_pre_network} proves
	\eqref{tv_gt_cvx_network_rate}.
\end{proof}

\small

\bibliographystyle{ieeetr}
\bibliography{ref_tv_gt}

\end{document}